\numberwithin{equation}{section}
\newcommand{\R}{\mathbb{R}}
\newtheorem{tm}{Theorem}[section]
\newtheorem{df}{Definition}
\newtheorem{rk}{Remark}
\begin{document}
\title{Structure-Preserving Implicit Runge-Kutta Methods for Stochastic Poisson Systems with Multiple Noises}
       \author{
        Liying Zhang\footnotemark[1], Fenglin Xue\footnotemark[2], Lijin Wang\footnotemark[3] \\
        {\small \footnotemark[1]~\footnotemark[2] School of Mathematical Science, China University of Mining and Technology, Beijing 100083, China} \\
        {\small \footnotemark[3] School of Mathematical Sciences, University of Chinese Academy of Sciences, Beijing 100049, China} 
    }
       \maketitle
        \footnotetext{\footnotemark[2]Corresponding author: xuefenglinlin@163.com}

       \begin{abstract}
          {\rm\small In this paper, we propose the diagonal implicit Runge-Kutta methods and transformed Runge-Kutta methods for stochastic Poisson systems with multiple noises. We prove that the first methods can preserve the Poisson structure, Casimir functions, and quadratic Hamiltonian functions in the case of constant structure matrix. Darboux-Lie theorem combined with coordinate transformation is used to construct the transformed Runge-Kutta methods for the case of non-constant structure matrix that preserve both the Poisson structure and the Casimir functions. Finally, through numerical experiments on stochastic rigid body systems and linear stochastic Poisson systems, the structure-preserving properties of the proposed two kinds of numerical methods are effectively verified.}\\

\textbf{Key words: }{\rm\small}stochastic Poisson systems, diagonal implicit Runge-Kutta methods, transformed Runge-Kutta methods, Poisson structure, Casimir functions
\end{abstract}

\section{Introduction}
\label{int}

The classical Hamiltonian systems are following form
\begin{equation}\label{1.1}
	dy = J^{-1}\nabla H(y(t))dt, \quad y \in \mathbb{R}^{2n},
\end{equation}
where $J = \begin{pmatrix}
	0 & I_n\\
	-I_n & 0
\end{pmatrix}$ is a symplectic matrix, and $H(y(t)) \in C^\infty(\mathbb{R}^{2n})$ is the Hamiltonian function. Hamiltonian systems are a class of very important mechanical systems defined on manifolds of even dimensions (see \cite{Haireretal2006} et al.). Sophus Lie \cite{Haireretal2006} proposed the Poisson system in 1888, which is an extension of the Hamiltonian system \eqref{1.1}, with the following form:
\begin{equation}\label{1.2}
	dy = B(y(t))\nabla H(y(t))dt.
\end{equation}
The commonality between these two systems lies in the fact that properties of the structure matrices $J$ and $B(y)$ satisfy the skew-symmetry condition \eqref{2.2} and the Jacobi identity \eqref{2.3}. The difference is that in system  \eqref{1.2} the dimension of $y$ is not restricted, but in \eqref{1.1} $y$ is required to be even dimensional.
Therefore, this allows the Poisson system to describe a broad range of dynamical systems because it widens the phase space dimensionality constraints. The Poisson systems have broad applicability and show significant value in various fields, including the astronomy, the biomechanical research, the fluid mechanics analysis and so on (see \cite{Fengetal2010,Haireretal2006,Zhuetal1994} et al.).

Influenced by stochastic factors, stochastic Hamiltonian systems and stochastic Poisson systems are introduced and investigated. Currently, structure-preserving methods have emerged as a focal point of research within the realm of stochastic Hamiltonian systems, leading to significant advancements  (see \cite{Hongetal2015,Hongetal2023,Milsteinetal2002a,Milsteinetal2002b,Misawaetal2000,Wangetal2017} et al.). As a generalization of the stochastic Hamiltonian systems, stochastic Poisson systems also exhibit structure-preserving properties, including Poisson structure, Casimir functions, and energy. If a numerical method almost surely satisfies the Poisson structure and Casimir functions, it is called a stochastic Poisson integrator for the stochastic Poisson system (see\cite{Hongetal2021}). In recent years, there have been studies on constructing stochastic Poisson integrators for stochastic Poisson systems driven by multiple noises. For instance, \cite{Wangetal2021} based on the Pad\'e approximation constructed a class of stochastic Poisson integrators for linear stochastic Poisson systems with constant structure matrix. \cite{Wangetal2022a} employed the midpoint method to construct stochastic Poisson integrators for the same stochastic Poisson systems. Additionally, \cite{Hongetal2021} proposed stochastic Poisson integrators for the original stochastic Poisson systems with non-constant structure matrix based on the Darboux-Lie theorem and $\alpha$-generating functions. On the basis of \cite{Hongetal2021}, 
the authors introduced the transformed midpoint method to construct stochastic Poisson integrators for the original stochastic Lotka-Volterra systems (see \cite{Liuetal2024}).

Besides preserving stochastic Poisson integrators, some scholars have focused on numerical methods that preserve both the Casimir functions and the energy. For example, \cite{Cohenetal2014} constructed a numerical method exactly preserving both the energy and quadratic Casimir functions for stochastic Poisson systems driven by single noise with non-constant structure matrix. Following this work, \cite{Wangetal2022a} developed the energy-Casimir-preserving scheme that preserves the energy and the general Casimir functions. In addition, there are some research works specifically only dedicated to energy-preserving numerical methods. \cite{Lietal2019} proposed a family of explicit parameterized stochastic Runge-Kutta methods for stochastic Poisson systems driven by single noise with non-constant structure matrix and proved that these methods can be energy-preserving with appropriate parameters. Moreover, \cite{Wangetal2022b} constructed the Hamiltonians-preserving schemes designed to preserve all Hamiltonian functions simultaneously for stochastic Poisson systems driven by multiple noises with non-constant structure matrix, drawing on the averaged vector field discrete gradient and orthogonal projection methods.

Up to now, there has been no reference on implicit Runge-Kutta methods for stochastic Poisson systems driven by multiple noises that can simultaneously preserve the Poisson structure, the Casimir functions, and the quadratic Hamiltonian functions. Inspired by pioneering works, our work constructs two classes of structure-preserving numerical methods. The first class of methods is diagonal implicit Runge-Kutta methods for stochastic Poisson systems with a constant structure matrix. The methods are obtained by the generalized midpoint method, and then combine the stochastic characteristics of the diffusion coefficients and employ variable coefficient techniques. We find that the coefficients of the diagonal implicit Runge-Kutta methods are the same as the conditions of the symplectic Runge-Kutta methods. The obtained coefficient conditions can overcome the difficulties arising from the structure matrix. We prove that the diagonal implicit Runge-Kutta methods preserve the Poisson structure, Casimir functions, and quadratic Hamiltonian functions of the stochastic Poisson systems. The second class of methods is transformed Runge-Kutta methods, which are constructed by utilizing the Darboux-Lie theorem for coordinate transformation. The key point is that we convert stochastic Poisson systems with a non-constant structure matrix into generalized stochastic Hamiltonian systems. Then we apply implicit Runge-Kutta methods for the generalized stochastic Hamiltonian systems. Subsequently, the transformed Runge-Kutta methods are acquired through transforming back to the original stochastic Poisson systems. It is show that the methods preserve the Poisson structure and the Casimir functions. As an application, these methods are applied for numerically solving three-dimensional stochastic rigid body systems and three-dimensional linear stochastic Poisson systems. The numerical results indicate that the proposed methods exhibit long-term computational stability and efficiency.

This paper is organized as follows. In Section 2, we provide a brief introduction to the basic knowledge of stochastic Poisson systems. In Section 3, we describe the construction process of two classes of numerical methods. In Section 4, we prove the structure-preserving properties of these methods. Two numerical experiments to demonstrate our theoretical results are presented in Section 5. The final section provides a comprehensive summary of our work.

\section{Stochastic Poisson systems}
\label{SPS}
Stochastic Poisson systems in $\mathbb{R}^d$ with initial value $y_0$ at $t_0$ are described as follows (see \cite{Hongetal2021}):
\begin{equation}\label{2.1}
	\begin{split}
		&dy(t)= B(y(t))\left(\nabla H_0(y(t))dt + \sum_{r=1}^m\nabla H_r(y(t))\circ dW_r(t)\right),\\
		&y(t_0)= y_0, \quad t\in[t_0,T],
	\end{split}
\end{equation}
where $H_i(y)\colon \mathbb{R}^d \rightarrow \mathbb{R} \, (i=0, \cdots, m)$ are smooth Hamiltonian functions, $W(t)=(W_1, \cdots, W_m)$ is an $m$-dimensional standard Wiener process, and "$\circ$" denotes the Stratonovich product. In this context, $B(y) = (b_{ij}(y)) \in \mathbb{R}^{d \times d}$ is a skew-symmetric matrix
\begin{equation}\label{2.2}
	b_{ij}(y)=-b_{ji}(y)
\end{equation}
satisfying the Jacobi identity
\begin{equation}\label{2.3}
	\sum_{s=1}^d\left(\frac{\partial b_{ij}(y)}{\partial y_s}b_{sk}(y)+ \frac{\partial b_{jk}(y)}{\partial y_s}b_{si}(y)+\frac{\partial b_{ki}(y)}{\partial y_s}b_{sj}(y)\right)=0, \quad \text{for} \quad i, j, k=1, \ldots, d.
\end{equation}

We start with a smooth matrix-valued function $B(y)$, by which the Poisson bracket between two functions $F$ and $G$ is defined as
\begin{equation}\label{2.4}
	\left\{F, G \right\}(y):=\left(\nabla F(y) \right)^\top B(y) \nabla G(y), \quad \forall F, G \in C^\infty(\mathbb{R}^d).
\end{equation}
It can be proven that the Poisson bracket defined above satisfies the following properties (see reference \cite{Fengetal2010} for details):
\begin{enumerate}[(i)]
	\item Skew-symmetry:
	\begin{equation*}
		\left\{F, G \right\}=-\left\{G, F \right\};
	\end{equation*}
	\item Bilinearity:
	\begin{equation*}
		\begin{aligned}
			\left\{\alpha F+\beta G, H \right\}&=\alpha \left\{F, H \right\}+\beta \left\{G, F  \right\};\\
			\left\{H, \alpha F+\beta G \right\}&=\alpha \left\{H, F \right\}+\beta \left\{H, G  \right\};
		\end{aligned}
	\end{equation*}
	\item Jacobi identity:
	\begin{equation*}
		\left\{\left\{F, G \right\}, H \right\}+ \left\{\left\{G, H \right\}, F \right\}+\left\{\left\{H, F \right\}, G \right\}=0;
	\end{equation*}
	\item Leibniz rule:
	\begin{equation*}
		\left\{F\cdot G, H \right\}=F \cdot \left\{G, H \right\}+G \cdot \left\{F, H \right\}.
	\end{equation*}
\end{enumerate}

Based on Poisson bracket \eqref{2.4}, we can define the following Poisson mapping, and its equivalent form is presented through Theorem \ref{th2.1}.
\begin{df}\label{ef1}(\cite{Fengetal2010})
	A transformation $\varphi \colon \mathbb{R}^d \to \mathbb{R}^d$ is called a Poisson mapping, if it preserves the Poisson bracket, i.e., 
		\begin{equation}\label{2.5}
		\left\{F\circ \varphi, G\circ \varphi \right\}(y)=\left\{F, G \right\}\circ \varphi(y), \quad \forall F, G \in C^\infty(\mathbb{R}^d).
	\end{equation}
\end{df}

\begin{tm}\label{th2.1}(\cite{Fengetal2010})
	For a Poisson manifold with structure matrix $B(y)$, \eqref{2.5} is equivalent to
		\begin{equation*}
		\frac{\partial \varphi (y)}{\partial y}B(y){\left(\frac{\partial \varphi (y)}{\partial y}\right)}^\top=B(\varphi (y)).
	\end{equation*}
\end{tm}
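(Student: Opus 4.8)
The plan is to establish the equivalence by directly expanding both sides of the defining identity \eqref{2.5} with the chain rule, and then to recover the matrix identity for the converse by specializing the arbitrary test functions $F,G$ to coordinate functions. Throughout I write $\varphi'(y) := \frac{\partial \varphi(y)}{\partial y}$ for the Jacobian.

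The first step is to compute the gradient of a composition. By the chain rule $\nabla(F\circ\varphi)(y) = (\varphi'(y))^\top \nabla F(\varphi(y))$, and likewise for $G$. Substituting these into the bracket definition \eqref{2.4}, the left-hand side of \eqref{2.5} becomes
\[
\{F\circ\varphi,\, G\circ\varphi\}(y) = (\nabla F(\varphi(y)))^\top \,\varphi'(y)\, B(y)\, (\varphi'(y))^\top \,\nabla G(\varphi(y)),
\]
whereas the right-hand side is simply
\[
\{F,G\}\circ\varphi(y) = (\nabla F(\varphi(y)))^\top \,B(\varphi(y))\, \nabla G(\varphi(y)).
\]
From this pair of expressions the implication $\Leftarrow$ is immediate: if $\varphi'(y)\, B(y)\, (\varphi'(y))^\top = B(\varphi(y))$ holds, the two central matrices coincide and hence the two sides agree for every $F,G$, so $\varphi$ preserves the bracket.

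For the converse $\Rightarrow$, the idea is that since \eqref{2.5} is assumed for \emph{all} smooth $F,G$, I am free to choose the test gradients $\nabla F(\varphi(y))$ and $\nabla G(\varphi(y))$ to be arbitrary vectors. The cleanest way to realize an arbitrary pair of basis vectors, without any assumption on the invertibility of $\varphi$, is to take the coordinate functions $F = x_i$ and $G = x_j$, whose gradients are the constant standard basis vectors $e_i$ and $e_j$ at every point. Inserting these into the two displayed expressions extracts exactly the $(i,j)$ entries $[\varphi'(y)\, B(y)\, (\varphi'(y))^\top]_{ij}$ and $[B(\varphi(y))]_{ij}$, and equating them for all $i,j=1,\dots,d$ yields the matrix identity at the arbitrary point $y$. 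The only point requiring care is precisely this reduction, namely that equality of the two bilinear forms in the test gradients forces equality of the underlying matrices; the coordinate-function choice settles it cleanly, which is why I expect no genuine obstacle beyond bookkeeping the transposes correctly.
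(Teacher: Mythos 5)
Your proof is correct, and it is essentially the standard argument: the paper itself states Theorem \ref{th2.1} as a cited result from \cite{Fengetal2010} without reproducing a proof, and the chain-rule expansion of both sides of \eqref{2.5} followed by specializing $F,G$ to the coordinate functions (so that the test gradients become the standard basis vectors, with no invertibility assumption on $\varphi$ needed) is exactly the canonical proof given in that reference. Nothing further is required.
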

Furthermore, if the structure matrix $B(y)$ of a stochastic Poisson system \eqref{2.1} is not full rank, then there exist Casimir functions in the system. Subsequently, we give the specific definition of the Casimir functions.
\begin{df}\label{ef2}(\cite{Haireretal2006})
	A function $C(y)$ is called a Casimir function of the stochastic Poisson system (\ref{2.1}), if
		\begin{equation*}
		\nabla C(y)^\top B(y)=0, \quad \forall y\in \mathbb{R}^d.
	\end{equation*}
\end{df}

It is easy to show that the Casimir function $C(y)$ is an invariant of the stochastic Poisson system \eqref{2.1}, see reference \cite{Hongetal2021} for more details.

A numerical method $\{y_n\}$ preserving the Poisson structure as well as the Casimir function for stochastic Poisson systems \eqref{2.1}, namely,
\begin{equation*}
	\frac{\partial y_{n+1}}{\partial y_n}B(y_n){\left(\frac{\partial y_{n+1}}{\partial y_n}\right)}^\top=B(y_{n+1}), \quad \forall n\in \mathbb{N},\quad a.s.,
\end{equation*}
\begin{equation*}
	C(y_{n+1})=C(y_n), \quad \forall n\in \mathbb{N},\quad a.s.,
\end{equation*}
are called stochastic Poisson integrators (see \cite{Hongetal2021}).

\section{Runge-Kutta methods for stochastic Poisson systems}
\subsection{Transformed Runge-Kutta methods}
\label{sec3.1}
	 The $s$-stage implicit Runge-Kutta methods with time step $h$ applied to the stochastic Poisson systems (\ref{2.1}) have the following form (see \cite{Burrage1999}):
	\begin{equation}\label{3.1}
		\begin{aligned}
			Y_i &= y_k+h\sum_{j=1}^sa_{ij}^0f_0(Y_j)+\sum_{r=1}^m\sum_{j=1}^sa_{ij}^rf_r(Y_j)J_k^r,\quad i=1,  \cdots, s,\\
			y_{k+1} &=y_k+h\sum_{i=1}^sb_{i}^0f_0(Y_i)+\sum_{r=1}^m\sum_{i=1}^sb_{i}^rf_r(Y_i)J_k^r,
		\end{aligned}
	\end{equation}
where $f_l(y)=B(y)\nabla H_l(y)\, (l=0, \cdots, m)$, $J_k^r=W_r(t_{k+1})-W_r(t_k) (r=1, \cdots, m)$. If $A=(a_{ij}^0)$ and $B^r=(a_{ij}^r)$ are $s \times s$  matrices of real elements, $(b^0)^\top =(b_1^0, b_2^0,\cdots, b_s^0)$ and $(b^r)^\top =(b_1^r, b_2^r,\cdots, b_s^r)$ are $s$-dimensional row vectors, then methods in \eqref{3.1} can be represented by a Butcher tableau
\begin{equation*}
	\renewcommand{\arraystretch}{1.5} 
	\begin{array}{|ccccc}
		A & B^1 & B^2 & \cdots & B^r \\
		\hline
		(b^0)^\top & (b^1)^\top & (b^2)^\top & \cdots & (b^r)^\top
	\end{array}.
\end{equation*}

The reference \cite{Wangetal2017} proved that the stochastic Runge-Kutta methods \eqref{3.1} preserve the symplectic structure almost surely, as explicitly stated in Theorem \ref{th3.1}.
\begin{tm}\label{th3.1}(\cite{Wangetal2017})
	For stochastic Hamiltonian systems (\ref{3.4}), if the coefficients of \eqref{3.1} satisfy
	\begin{equation}\label{3.2}
		\begin{aligned}
			b_i^0b_j^0- b_i^0a_{ij}^0- b_j^0a_{ji}^0&= 0,\\
			b_i^0b_j^r- b_i^0a_{ij}^r- b_j^ra_{ji}^0&=0,\\
			b_i^rb_j^\zeta- b_i^ra_{ij}^\zeta- b_j^\zeta a_{ji}^r&=0
		\end{aligned}
	\end{equation}
	for all $i, j=1, \cdots, s; r, \zeta=1, \cdots, m$, then the methods preserve the symplectic structure almost surely.
\end{tm}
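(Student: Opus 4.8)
The plan is to show that the one-step map $y_k \mapsto y_{k+1}$ defined by \eqref{3.1} preserves the symplectic two-form $\omega = \mathrm{d}y^\top \wedge J\,\mathrm{d}y$, i.e. to establish $\mathrm{d}y_{k+1}^\top \wedge J\,\mathrm{d}y_{k+1} = \mathrm{d}y_k^\top\wedge J\,\mathrm{d}y_k$ almost surely, where $\mathrm{d}$ denotes exterior differentiation with respect to the initial value $y_k$. Two structural facts will drive the argument, and I would record them at the outset. First, in the Hamiltonian case the structure matrix $B=J^{-1}$ is constant, so $f_l(y) = J^{-1}\nabla H_l(y)$ and the Jacobian satisfies $J f_l'(y) = \nabla^2 H_l(y)$, which is symmetric; equivalently $(f_l'(y))^\top J = -J f_l'(y)$. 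Second, for any vector-valued $1$-forms $\alpha,\beta$ and the skew matrix $J$ one has $\alpha^\top\wedge J\beta = \beta^\top\wedge J\alpha$, while $\alpha^\top\wedge S\,\alpha = 0$ whenever $S$ is symmetric.

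Next I would differentiate \eqref{3.1}. Writing $u_i^l := f_l'(Y_i)\,\mathrm{d}Y_i$, the update differential becomes $\mathrm{d}y_{k+1} = \mathrm{d}y_k + \sum_i g_i$ with $g_i := h\,b_i^0 u_i^0 + \sum_{r}b_i^r u_i^r J_k^r$, while the stage equations give $\mathrm{d}y_k = \mathrm{d}Y_i - h_i$ with $h_i := h\sum_j a_{ij}^0 u_j^0 + \sum_r\sum_j a_{ij}^r u_j^r J_k^r$ for each $i$. Substituting the update into $\omega_{k+1}$ and using the symmetry $\alpha^\top\wedge J\beta = \beta^\top\wedge J\alpha$ to merge the two linear contributions, I obtain
\[
\mathrm{d}y_{k+1}^\top\wedge J\,\mathrm{d}y_{k+1} = \mathrm{d}y_k^\top\wedge J\,\mathrm{d}y_k + 2\sum_i \mathrm{d}y_k^\top\wedge J g_i + \sum_{i,j} g_i^\top\wedge J g_j .
\]
In the linear term I would then replace $\mathrm{d}y_k$ by $\mathrm{d}Y_i - h_i$ via the $i$-th stage relation. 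The $\mathrm{d}Y_i$-part produces $2\sum_i \mathrm{d}Y_i^\top\wedge J g_i$, whose summands all contain factors $\mathrm{d}Y_i^\top\wedge J u_i^l = \mathrm{d}Y_i^\top\wedge (J f_l'(Y_i))\,\mathrm{d}Y_i = 0$ by the symmetry of $J f_l'$. This is exactly where the Hamiltonian structure is indispensable, and it annihilates the entire $\mathrm{d}Y_i$-part, leaving
\[
\omega_{k+1} - \omega_k = \sum_{i,j} g_i^\top\wedge J g_j - 2\sum_i h_i^\top\wedge J g_i .
\]

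Finally I would collect terms according to the scalar prefactors $h^2$, $h J_k^r$, and $J_k^r J_k^\zeta$. Within each sector I would symmetrize the stage indices using $(u_i^p)^\top\wedge J u_j^q = (u_j^q)^\top\wedge J u_i^p$ and, in the diffusion--diffusion sector, also symmetrize the noise indices via the symmetry of $J_k^r J_k^\zeta$. After symmetrization, the coefficient multiplying each independent wedge monomial $(u_i^p)^\top\wedge J u_j^q$ is precisely one of $b_i^0 b_j^0 - b_i^0 a_{ij}^0 - b_j^0 a_{ji}^0$, $\ b_i^0 b_j^r - b_i^0 a_{ij}^r - b_j^r a_{ji}^0$, or $\ b_i^r b_j^\zeta - b_i^r a_{ij}^\zeta - b_j^\zeta a_{ji}^r$, all of which vanish by \eqref{3.2}; hence $\omega_{k+1} = \omega_k$. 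Since the Wiener increments $J_k^r$ enter only as scalar multipliers, the identity is pathwise and therefore holds almost surely. I expect the main obstacle to be this last bookkeeping step: matching the $a$-weighted contributions of $-2\sum_i h_i^\top\wedge J g_i$ against the $b$-weighted contributions of $\sum_{i,j} g_i^\top\wedge J g_j$ and executing the two symmetrizations so that exactly the three brackets of \eqref{3.2} emerge with correct signs, keeping the drift, drift--diffusion, and diffusion--diffusion sectors cleanly separated.
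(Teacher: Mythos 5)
There is no in-paper proof to compare against: Theorem \ref{th3.1} is quoted from \cite{Wangetal2017} and used as a black box, so your attempt can only be judged on its own merits, and it is correct --- it is in substance the standard wedge-product proof of stochastic symplecticity conditions (differentiate the scheme, annihilate the $\mathrm{d}Y_i^\top\wedge J\,u_i^l$ terms via the symmetry of $Jf_l'=\nabla^2 H_l$, then match the $b$-weighted against the $a$-weighted wedge monomials sector by sector), and the final bookkeeping does close: symmetrizing in the stage indices yields exactly $b_i^0b_j^0-b_i^0a_{ij}^0-b_j^0a_{ji}^0$ in the drift--drift sector, $2\left(b_i^0b_j^r-b_i^0a_{ij}^r-b_j^ra_{ji}^0\right)$ in the mixed sector, and, after also symmetrizing the noise indices using $J_k^rJ_k^\zeta=J_k^\zeta J_k^r$, $b_i^rb_j^\zeta-b_i^ra_{ij}^\zeta-b_j^\zeta a_{ji}^r$ in the diffusion--diffusion sector, all vanishing by \eqref{3.2}; pathwise validity, hence the almost-sure claim, follows since the increments enter only as scalars. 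The one point you should make explicit when applying this to \eqref{3.4} is that its structure matrix is the degenerate block matrix $\begin{pmatrix} J^{-1} & 0\\ 0 & 0\end{pmatrix}$ rather than an invertible $J^{-1}$: since the $\mathcal{C}$-components of every $g_l$ vanish, the scheme propagates $\mathcal{C}_{k+1}=\mathcal{C}_k$ exactly, so $\mathcal{C}$ enters the Hamiltonians $K_l(\cdot,\mathcal{C})$ only as a frozen parameter and your argument with nondegenerate $J$ applies verbatim to the $Z$-block, which is the symplectic structure the theorem asserts is preserved.
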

Below we introduce the Darboux-Lie theorem, which is used to prove that stochastic Poisson systems \eqref{2.1} preserve the Poisson structure.
\begin{tm}[{\textbf{Darboux-Lie theorem}}]\label{th3.2}(\cite{Haireretal2006})
	Suppose that the matrix $B(y)$ defines a Poisson bracket and is of constant rank $d-l=2n$ in a neighborhood of $y_0\in\R^d$. Then there exist functions $P_1(y),\cdots,P_n(y),Q_1(y),\cdots,Q_n(y)$, and $C_1(y),\cdots,C_l(y)$ satisfying 
	\begin{equation}\label{3.3} 
		\begin{matrix}
			&\{P_i,P_j\}=0, &\{P_i,Q_j\}=-\delta_{ij}, &\{P_i,C_s\}=0, \\
			&\{Q_i,P_j\}=\delta_{ij}, &\{Q_i,Q_j\}=0, &\{Q_i,C_s\}=0,\\
			&\{C_k,P_j\}=0, &\{C_k,Q_j\}=0, &\{C_k,C_s\}=0
		\end{matrix}
	\end{equation}
	for $i=1,\ldots,n,j=1,\ldots,n,k=1,\ldots,l,s=1,\ldots,l$, on a neighborhood of $y_0$. The gradients of $P_i,\,\,Q_j,\,\,C_k$ are linearly independent, so that the $\R^d\rightarrow\R^d$ mapping $y\rightarrow(P_i(y), Q_j(y), C_k(y))$ constitutes a local change of coordinates to canonical form.
\end{tm}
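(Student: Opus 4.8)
The plan is to prove this by induction on $n$, peeling off one canonical pair $(P_i,Q_i)$ at a time by means of the flow-box (straightening-out) theorem for vector fields together with the Jacobi identity \eqref{2.3}. Throughout I write $X_H := B(y)\nabla H$ for the Hamiltonian vector field of a function $H$, so that $X_H[F] = \nabla F^\top B(y)\nabla H = \{F,H\}$ is the derivative of $F$ along $X_H$. For the base case $2n=0$, the hypothesis forces $B(y)\equiv 0$ in a neighborhood of $y_0$, hence $\{F,G\}\equiv 0$ for all $F,G$; every coordinate function is then a Casimir, and taking $C_1,\dots,C_l$ to be the $d=l$ coordinates establishes \eqref{3.3} trivially, with linearly independent gradients.

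For the inductive step, suppose $2n\ge 2$, so $B(y_0)\neq 0$. First I choose a smooth function $P_1$ with $\nabla P_1(y_0)\notin\ker B(y_0)$, so that $X_{P_1}(y_0)=B(y_0)\nabla P_1(y_0)\neq 0$. By the straightening-out theorem there are local coordinates $z$ near $y_0$ in which $X_{P_1}=\partial/\partial z_1$; setting $Q_1:=z_1$ yields $\{Q_1,P_1\}=X_{P_1}[Q_1]=1$, i.e. $\{P_1,Q_1\}=-1$. The key structural fact is that the Jacobi identity gives $[X_{P_1},X_{Q_1}]=-X_{\{P_1,Q_1\}}$, which vanishes because $\{P_1,Q_1\}\equiv-1$ is constant; hence the flows of $X_{P_1}$ and $X_{Q_1}$ commute, and these two fields are independent near $y_0$.

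Straightening the two commuting independent fields simultaneously, I complete $(P_1,Q_1)$ to a local coordinate system. The decisive step is to correct the remaining $d-2$ transverse coordinate functions by flowing them along the two commuting flows so that each resulting function $u_\alpha$ satisfies $\{P_1,u_\alpha\}=\{Q_1,u_\alpha\}=0$; the Jacobi identity guarantees that this correction is consistent. In the coordinates $(P_1,Q_1,u_1,\dots,u_{d-2})$ the structure matrix becomes block diagonal, with the $(P_1,Q_1)$ block equal to the standard $2\times2$ symplectic block and a residual block $\tilde B(u)$ in the $u$-variables. Skew-symmetry and the Jacobi identity pass to $\tilde B$, so $\tilde B$ defines a Poisson bracket of constant rank $2n-2$; applying the induction hypothesis produces $P_2,\dots,P_n,Q_2,\dots,Q_n,C_1,\dots,C_l$ as functions of $u$ satisfying \eqref{3.3} among themselves, and since they depend only on $u$ they automatically Poisson-commute with $P_1$ and $Q_1$, completing \eqref{3.3}. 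Linear independence of all the gradients — equivalently, that $y\mapsto(P_i,Q_j,C_k)$ is a local change of coordinates — follows because the map is a composition of the diffeomorphisms furnished at each stage.

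The heart of the argument, and the step I expect to be most delicate, is the decoupling: showing that the transverse coordinates can be chosen to Poisson-commute with both $P_1$ and $Q_1$, so that the bracket genuinely splits and the reduced matrix $\tilde B$ is again a rank-$(2n-2)$ Poisson structure. This is precisely where the Jacobi identity \eqref{2.3} is indispensable, whereas the preceding straightening steps are routine once the commuting property $[X_{P_1},X_{Q_1}]=0$ has been secured.
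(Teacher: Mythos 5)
The paper itself gives no proof of Theorem \ref{th3.2}; it is quoted verbatim from \cite{Haireretal2006}, so there is no in-paper argument to compare against. Your induction --- straightening $X_{P_1}$ to manufacture $Q_1$ with $\{P_1,Q_1\}=-1$ (your sign bookkeeping via $X_H[F]=\{F,H\}$ is consistent with the theorem's convention $\{Q_i,P_j\}=\delta_{ij}$), invoking the Jacobi identity to get $[X_{P_1},X_{Q_1}]=-X_{\{P_1,Q_1\}}=0$, flattening the $d-2$ transverse functions along the two commuting flows so that $\{P_1,u_\alpha\}=\{Q_1,u_\alpha\}=0$, and then recursing on the reduced structure $\tilde B$ of constant rank $2n-2$ --- is precisely the classical Lie argument, and it is the proof given in the cited reference, so your route is correct and essentially the canonical one. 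The one step you assert without spelling out, and which needs the Jacobi identity a second time, is that $\tilde B$ is a function of $u$ alone: from $\{u_\alpha,P_1\}=\{u_\alpha,Q_1\}=0$ the Jacobi identity gives $\{\{u_\alpha,u_\beta\},P_1\}=\{\{u_\alpha,u_\beta\},Q_1\}=0$, i.e. the entries $\{u_\alpha,u_\beta\}$ are constant along the $(P_1,Q_1)$-directions; without this, $\tilde B$ would depend on $(P_1,Q_1)$ and the induction hypothesis could not be applied on the $u$-space, so you should make it explicit, together with the (routine) verification that $X_{P_1}$ and $X_{Q_1}$ are pointwise independent because $\{P_1,Q_1\}\neq 0$ while $\{P_1,P_1\}=0$.
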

The authors proposed stochastic Poisson integrators for stochastic Poisson systems (SPSs) by first converting them to generalized stochastic Hamiltonian systems (SHSs) via coordinate transformation implied by the Darboux-Lie theorem, then applying stochastic symplectic methods to the SHSs, and finally converting the symplectic methods back to the original SPSs to get stochastic Poisson integrators (see \cite{Hongetal2021}). In this paper, we apply the symplectic Runge-Kutta methods to the generalized SHSs transformed from the original SPSs. Then we obtain the transformed stochastic Runge-Kutta methods for the SPSs, which possess structure-preserving properties. The specific procedure is as follows:
\begin{itemize}
	\item  By the Darboux-Lie theorem, we find a coordinate transformation $\theta(y) \colon y\rightarrow \bar{y}=(Z(y)^\top,\mathcal{C}(y)^\top)^\top$ and $Z(y)=(P(y)^\top,Q(y)^\top)^\top$ with $P(y)=(P_1(y),\cdots,P_n(y))^\top,$ $Q(y)=(Q_1(y),\cdots,Q_n(y))^\top,$ $\mathcal{C}(y)=(C_1(y),\cdots,C_l(y))^\top$, which transforms the system \eqref{2.1} with initial value $y_0$ into the following SHSs with initial value $Z_0=(P(y_0)^\top, Q(y_0)^\top)^\top$,
	\begin{equation}\label{3.4}
		\begin{split}
		dZ&=J^{-1}\left(\nabla_Z K_0(Z, \mathcal{C})dt+\sum_{r=1}^m\nabla_Z K_r(Z, \mathcal{C})\circ dW_r(t)\right),\\
		d\mathcal{C}&=0,
		\end{split}
	\end{equation}
where $K_l(Z, \mathcal{C})=K_l(\bar{y})=H_l(y) (l=0, \cdots,m),$ $J^{-1}=\begin{pmatrix}
	0 & -I_n \\
	I_n & 0 
\end{pmatrix}$.
	\item 
Apply the symplectic Runge-Kutta methods \eqref{3.1} to \eqref{3.4}
\begin{equation*}
	\begin{aligned}
		\bar{Y}_i &= \bar{y}_k+h\sum_{j=1}^sa_{ij}^0g_0(\bar{Y}_j)+\sum_{r=1}^m\sum_{j=1}^sa_{ij}^rg_r(\bar{Y}_j)J_k^r, \\
		\bar{y}_{k+1}&=\bar{y}_k+ h\sum_{i=1}^sb_i^0g_0(\bar{Y}_i)+\sum_{r=1}^m\sum_{i=1}^sb_i^rg_r(\bar{Y}_i)J_k^r,
	\end{aligned}
\end{equation*}
where $g_l(\bar{y})
= \begin{pmatrix}
	J^{-1} & 0\\
	0 & 0
\end{pmatrix} \nabla K_l(\bar{y})=\frac{\partial \bar{y}}{\partial y}B(y)\nabla H_l(y) =\theta_yf_l(y)$. Let $g_l(\bar{y})\colon =v_l(y)$, and combined with $\bar{y}=\theta(y)$, we know that
\begin{equation*}
	\begin{aligned}
		\theta(Y_i)&=\theta(y_k)+h\sum_{j=1}^sa_{ij}^0v_0(Y_j)+\sum_{r=1}^m\sum_{j=1}^sa_{ij}^rv_r(Y_j)J_k^r, \\
		\theta(y_{k+1})&=\theta(y_k)+h\sum_{i=1}^sb_i^0v_0(Y_i)+\sum_{r=1}^m\sum_{i=1}^sb_i^rv_r(Y_i)J_k^r.
	\end{aligned}
\end{equation*}
	\item Using the inverse transformation $y_{k+1}=\theta^{-1}(\bar{y}_{k+1})$, we obtain the numerical solutions $y_{k+1}$ for the original system \eqref{2.1}, 
	\begin{equation}\label{3.5}
		\begin{aligned}
			Y_i&=\theta^{-1}\left(\theta(y_k)+h\sum_{j=1}^sa_{ij}^0v_0(Y_j)+\sum_{r=1}^m\sum_{j=1}^sa_{ij}^rv_r(Y_j)J_k^r\right), \\
			y_{k+1}&=\theta^{-1}\left(\theta(y_k)+h\sum_{i=1}^sb_i^0v_0(Y_i)+\sum_{r=1}^m\sum_{i=1}^sb_i^rv_r(Y_i)J_k^r\right).
		\end{aligned}
	\end{equation}
\end{itemize}
\begin{rk}\label{rk1}
	When $\theta_l$ is the identity mapping, the numerical methods \eqref{3.5} reduce to the stochastic Runge-Kutta methods \eqref{3.1}. 
\end{rk}
\subsection{Diagonal implicit Runge-Kutta methods}
\label{sec3.2}

By generalizing the midpoint method and considering the randomness of the diffusion coefficients, we propose the diagonal implicit Runge-Kutta methods as a composite of $s$ midpoint methods using variable coefficient techniques. The following theorem elaborates on this conclusion.

\begin{tm}\label{th3.3}
	The diagonal implicit Runge-Kutta methods can be written in the following form
\begin{equation}\label{3.6}
	\begin{array}{|ccccccccccccc}
		 \frac{b_1^{0}}{2} &   &   & &\frac{b_1^{1}}{2} &   &  &  &\cdots  &\frac{b_1^{m}}{2} &   &  &\\
		 b_1^{0} & \frac{b_2^{0}}{2} &  &  & b_1^{1} & \frac{b_2^{1}}{2} & & &\cdots  & b_1^{m} & \frac{b_2^{m}}{2} & &\\
		 \vdots & \vdots & \ddots & 	& \vdots & \vdots & \ddots & &\cdots 	& \vdots & \vdots & \ddots &\\
		 b_1^{0} & b_2^{0} & \cdots & \frac{b_s^{0}}{2} & b_1^{1} & b_2^{1} & \cdots & \frac{b_s^{1}}{2} &\cdots & b_1^{m} & b_2^{m} & \cdots & \frac{b_s^{m}}{2}\\
		\hline
		 b_1^{0} & b_2^{0} & \cdots & b_s^{0} & b_1^{1} & b_2^{1} & \cdots & b_s^{1}&\cdots  & b_1^{m} & b_2^{m} & \cdots & b_s^{m} \\
	\end{array}
\end{equation}
\end{tm}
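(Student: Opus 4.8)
The plan is to exhibit the scheme \eqref{3.6} as the composition of $s$ generalized midpoint steps carrying variable coefficients, and then to read the Butcher blocks off directly. First I would write the composite scheme explicitly. Setting $z_0=y_k$, let the $i$-th substep ($i=1,\ldots,s$) advance $z_{i-1}$ to $z_i$ by a midpoint rule weighted by $b_i^0$ in the drift and by $b_i^r$ in the $r$-th diffusion channel,
\begin{equation*}
z_i = z_{i-1}+h\,b_i^0 f_0\!\left(\frac{z_{i-1}+z_i}{2}\right)+\sum_{r=1}^m b_i^r f_r\!\left(\frac{z_{i-1}+z_i}{2}\right)J_k^r,
\end{equation*}
with $z_s=y_{k+1}$, where $f_l(y)=B(y)\nabla H_l(y)$ as in \eqref{3.1}. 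Allowing the fractions $b_i^r$ to depend on the channel $r$ is exactly the ``variable coefficient'' freedom, and it is what produces the separate blocks $B^r$.

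The key device is to take the stage variables to be the midpoints $Y_i:=\tfrac{1}{2}(z_{i-1}+z_i)$. Writing the increment $\Delta_i:=z_i-z_{i-1}=h\,b_i^0 f_0(Y_i)+\sum_{r=1}^m b_i^r f_r(Y_i)J_k^r$, the telescoping identity $z_{i-1}=y_k+\sum_{l=1}^{i-1}\Delta_l$ yields $Y_i=z_{i-1}+\tfrac12\Delta_i=y_k+\sum_{l=1}^{i-1}\Delta_l+\tfrac12\Delta_i$. Substituting each $\Delta_l$ and grouping the drift and diffusion parts, I would obtain the stage equations of \eqref{3.1} in the form
\begin{equation*}
Y_i = y_k+h\sum_{j=1}^s a_{ij}^0 f_0(Y_j)+\sum_{r=1}^m\sum_{j=1}^s a_{ij}^r f_r(Y_j)J_k^r,
\end{equation*}
where for each $r\in\{0,1,\ldots,m\}$ the coefficient equals $b_j^r$ when $j<i$, equals $\tfrac{b_i^r}{2}$ when $j=i$ (the half coming from the midpoint average), and vanishes when $j>i$. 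This is precisely the strictly-lower-triangular-plus-half-diagonal pattern of every block $A,B^1,\ldots,B^m$ in \eqref{3.6}.

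Finally, the terminal value is $y_{k+1}=z_s=y_k+\sum_{i=1}^s\Delta_i$, which expands to $y_k+h\sum_{i=1}^s b_i^0 f_0(Y_i)+\sum_{r=1}^m\sum_{i=1}^s b_i^r f_r(Y_i)J_k^r$; hence the weight rows are exactly $(b^0)^\top,(b^1)^\top,\ldots,(b^m)^\top$, matching the bottom row of \eqref{3.6}. Conversely, running \eqref{3.1} with these coefficients reproduces the composite scheme, so the two coincide.

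The step I expect to require the most care is the telescoping bookkeeping in the second paragraph: one must check that when the increments $\Delta_l$ are inserted into $Y_i$, the factor $\tfrac12$ attaches only to the self-term $l=i$ — giving the diagonal entry $\tfrac{b_i^r}{2}$ — while all earlier stages $l<i$ contribute full weights $b_l^r$ and no later stage appears, which is what makes the method diagonally implicit. Beyond this indexing verification the derivation is a direct calculation; no analytic subtlety enters, since each $J_k^r$ is treated as a fixed scalar throughout the algebraic identification of the tableau.
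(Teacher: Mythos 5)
Your proposal is correct and follows essentially the same route as the paper's own proof: you compose $s$ variable-coefficient midpoint steps, take the stage values $Y_i$ to be the substep midpoints, and telescope the increments to read off the half-diagonal, lower-triangular tableau \eqref{3.6} together with the weight rows $(b^0)^\top,\ldots,(b^m)^\top$. The only (harmless) additions beyond the paper's argument are your cleaner $\Delta_i$-increment bookkeeping and the explicit remark that the identification also runs in the converse direction.
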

where $b_s^l=1-b_1^l- \cdots-b_{s-1}^l$ for $l=0, \cdots ,m.$
\begin{proof}
Assuming that the midpoint method is applied over $s$ steps with time steps $b_1^{0}h, b_2^{0}h, \cdots,b_s^{0}h$ respectively, and the diffusion coefficients are taken as $b_i^{r}f_r, \, r=1, \cdots, m, \, i=1, \cdots, s$.
	\begin{equation*}
		y_k\stackrel{b_1^{0}h}{\longrightarrow}y_{k+\frac{1}{s}}\stackrel{b_2^{0}h}{\longrightarrow}y_{k+\frac{2}{s}}\cdots \stackrel{b_s^{0}h}{\longrightarrow}y_{k+1}
	\end{equation*}
then there are
	\begin{equation}\label{3.7}
		\begin{aligned}
			y_{k+\frac{1}{s}} &= y_k + b_1^0 h f_0\left(\frac{y_{k+\frac{1}{s}} + y_k}{2}\right) + \sum_{r=1}^m b_1^r f_r \left(\frac{y_{k+\frac{1}{s}} + y_k}{2}\right) J_k^r
			,\\
			y_{k+\frac{2}{s}} &= y_{k+\frac{1}{s}} + b_2^0 h f_0\left(\frac{y_{k+\frac{2}{s}}+y_{k+\frac{1}{s}}}{2}\right) + \sum_{r=1}^m b_2^r f_r \left(\frac{y_{k+\frac{2}{s}}+y_{k+\frac{1}{s}}}{2}\right) J_k^r
			,\\
			&\cdots\\
			y_{k+1} &= y_{k+\frac{s-1}{s}} + b_s^0 h f_0\left(\frac{y_{k+1}+y_{k+\frac{s-1}{s}}}{2}\right) + \sum_{r=1}^m b_s^r f_r \left(\frac{y_{k+1}+y_{k+\frac{s-1}{s}}}{2}\right) J_k^r.
		\end{aligned}
	\end{equation}
Transform the formula \eqref{3.7} into the following form
	\begin{equation}\label{3.8}
		\begin{aligned}
			\frac{y_{k+\frac{1}{s}} + y_k}{2} &= y_k + \frac{b_1^0}{2} h f_0\left(\frac{y_{k+\frac{1}{s}} + y_k}{2}\right) + \sum_{r=1}^m \frac{b_1^r}{2} f_r \left(\frac{y_{k+\frac{1}{s}} + y_k}{2}\right) J_k^r
			,\\
			\frac{y_{k+\frac{2}{s}}+y_{k+\frac{1}{s}}}{2} &= y_{k+\frac{1}{s}} + \frac{b_2^0}{2} h f_0\left(\frac{y_{k+\frac{2}{s}}+y_{k+\frac{1}{s}}}{2}\right) + \sum_{r=1}^m \frac{b_2^r}{2} f_r \left(\frac {y_{k+\frac{2}{s}}+y_{k+\frac{1}{s}}}{2}\right) J_k^r
			,\\
			&\cdots\\
			\frac{y_{k+1}+y_{k+\frac{s-1}{s}}}{2} &= y_{k+\frac{s-1}{s}} + \frac{b_s^0}{2} h f_0\left(\frac{y_{k+1}+y_{k+\frac{s-1}{s}}}{2}\right) + \sum_{r=1}^m \frac{b_s^r}{2} f_r \left(\frac{y_{k+1}+y_{k+\frac{s-1}{s}}}{2}\right) J_k^r.
		\end{aligned}
	\end{equation}
	Let
	\begin{equation}\label{3.9}
		\begin{aligned}
			Y_1=&\frac{y_{k+\frac{1}{s}} + y_k}{2} ,\\
			Y_2=&\frac{y_{k+\frac{2}{s}}+y_{k+\frac{1}{s}}}{2},\\
			&\cdots\\
			Y_s=&\frac{y_{k+1}+y_{k+\frac{s-1}{s}}}{2} .
		\end{aligned}
	\end{equation}
Combining \eqref{3.7}, \eqref{3.8} and \eqref{3.9}, we have 
	\begin{equation}\label{3.10}
		\begin{aligned}
			Y_1 =& y_k + \frac{b_1^0}{2} h f_0\left(Y_1 \right) + \sum_{r=1}^m \frac{b_1^r}{2} f_r \left(Y_1 \right) J_k^r
			,\\
			Y_2 =& y_k + b_1^0hf_0\left(Y_1 \right)+\frac{b_2^0}{2} h f_0\left(Y_2 \right) +\sum_{r=1}^m b_1^r f_r \left(Y_1 \right) J_k^r+ \sum_{r=1}^m \frac{b_2^r}{2} f_r \left(Y_2 \right) J_k^r
			,\\
			&\cdots\\
			Y_s =& y_k +  b_1^0hf_0\left(Y_1 \right)+ b_2^0hf_0\left(Y_2 \right)+\cdots +  \frac{b_s^0}{2} h f_0\left(Y_s \right)\\
			&+\sum_{r=1}^m b_1^r f_r \left(Y_1 \right) J_k^r+\sum_{r=1}^m b_2^r f_r \left(Y_2 \right) J_k^r+\cdots + \sum_{r=1}^m \frac{b_s^r}{2} f_r \left(Y_s \right) J_k^r,
		\end{aligned}
	\end{equation}
\begin{equation*}
	\begin{aligned}
		y_{k+1}=&y_k+b_1^0hf_0\left(Y_1 \right)+ b_2^0hf_0\left(Y_2 \right)+\cdots + b_s^0 h f_0\left(Y_s \right)\\
		&+\sum_{r=1}^m b_1^r f_r \left(Y_1 \right) J_k^r+\sum_{r=1}^m b_2^r f_r \left(Y_2 \right) J_k^r+\cdots + \sum_{r=1}^m b_s^r f_r \left(Y_s \right) J_k^r.
	\end{aligned}
\end{equation*}
\end{proof}
\begin{rk}\label{rk2}
The diagonal implicit Runge-Kutta methods \eqref{3.6} preserve the symplectic structure because the coefficients $a_{ij}^l$ are given by $a_{ij}^l = \frac{1}{2}b_j^l$ if $i=j$, $a_{ij}^l = b_j^l$ if $i>j$, and $a_{ij}^l = 0$ if $i<j$, which satisfies \eqref{3.2}. In contrast, any explicit Runge-Kutta methods cannot preserve the symplectic structure.
\end{rk}

\section{Main results}
In this section, we demonstrate that the two numerical methods proposed in Section 3 are structure-preserving.
\begin{tm}\label{th4.1}
Assuming that the implicit Runge-Kutta methods preserve the symplectic structure, the transformed Runge-Kutta methods for SPSs with non-constant structure matrix preserve both the Poisson structure and the Casimir functions. 
\end{tm}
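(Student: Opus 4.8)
The plan is to exploit the factorization of the one-step map of the transformed scheme \eqref{3.5} as $\Phi=\theta^{-1}\circ\bar{\Phi}\circ\theta$, where $\theta$ is the Darboux-Lie coordinate change, $\bar{\Phi}\colon\bar{y}_k\mapsto\bar{y}_{k+1}$ is the one-step map of the symplectic Runge-Kutta method applied to the generalized SHS \eqref{3.4}, and $\theta^{-1}$ transforms back. The guiding principle is that a Poisson map conjugated by Poisson maps is again a Poisson map, so I would verify that each of the three factors respects the relevant structure matrix and then combine them through Theorem \ref{th2.1}.

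First I would record that $\theta$ is a Poisson map carrying $B(y)$ to the constant canonical matrix $\bar{B}=\begin{pmatrix}J^{-1}&0\\0&0\end{pmatrix}$. This is read off directly from the bracket relations \eqref{3.3}: the $(a,b)$ entry of the structure matrix in the new coordinates equals $\{\bar{y}_a,\bar{y}_b\}=(\nabla\theta_a)^\top B\,\nabla\theta_b$, and these assemble into $\theta_y\,B(y)\,\theta_y^\top=\bar{B}$, which by Theorem \ref{th2.1} is exactly the Poisson-map identity relating $B$ and $\bar{B}$. Next I would show that $\bar{\Phi}$ is a Poisson map for $\bar{B}$. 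Because the lower block of $g_l=\begin{pmatrix}J^{-1}&0\\0&0\end{pmatrix}\nabla K_l$ vanishes, the scheme leaves the Casimir block fixed, so $\mathcal{C}_{k+1}=\mathcal{C}_k$ and the Jacobian $\partial\bar{y}_{k+1}/\partial\bar{y}_k$ is block upper-triangular with identity lower-right block; the $Z$-block $M=\partial Z_{k+1}/\partial Z_k$ is symplectic by the standing hypothesis that the method preserves the symplectic structure (conditions \eqref{3.2} of Theorem \ref{th3.1}). A short block computation then gives $(\partial\bar{y}_{k+1}/\partial\bar{y}_k)\,\bar{B}\,(\partial\bar{y}_{k+1}/\partial\bar{y}_k)^\top=\bar{B}$, since the zero $(2,2)$-block of $\bar{B}$ annihilates the coupling block $N=\partial Z_{k+1}/\partial\mathcal{C}_k$ and only $MJ^{-1}M^\top=J^{-1}$ survives.

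With these two facts in hand, I would assemble the factors by the chain rule. Writing $D=\theta_y|_{y_k}$, $G=\partial\bar{\Phi}/\partial\bar{y}|_{\bar{y}_k}$, and $E=(\theta_y|_{y_{k+1}})^{-1}$, so that $\partial y_{k+1}/\partial y_k=EGD$, the identity $\theta_y|_{y_{k+1}}B(y_{k+1})\theta_y|_{y_{k+1}}^\top=\bar{B}$ rearranges to $B(y_{k+1})=E\bar{B}E^\top$. Then $(EGD)\,B(y_k)\,(EGD)^\top=E\,G\,(DB(y_k)D^\top)\,G^\top E^\top=EG\bar{B}G^\top E^\top=E\bar{B}E^\top=B(y_{k+1})$, which is precisely the Poisson-structure-preservation identity of Theorem \ref{th2.1} for the transformed scheme. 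Preservation of the Casimir functions is then immediate: the Casimir functions are exactly the components $\mathcal{C}=(C_1,\ldots,C_l)$ of $\theta$, and since $\mathcal{C}_{k+1}=\mathcal{C}_k$ we conclude $C_j(y_{k+1})=C_j(y_k)$ for every $j$.

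I expect the main obstacle to be the second step, namely carefully justifying that symplecticity of the $Z$-block alone, combined with the rank deficiency of $\bar{B}$, is enough to make the full map a Poisson map for the degenerate structure matrix. The delicate point is verifying that the nonzero coupling block $N=\partial Z_{k+1}/\partial\mathcal{C}_k$ (which is generally present because each $K_l$ depends on $\mathcal{C}$) does not spoil the identity; this hinges precisely on the block-triangular form of the Jacobian forced by $\mathcal{C}_{k+1}=\mathcal{C}_k$ and on the vanishing $(2,2)$-block of $\bar{B}$, so I would make that cancellation explicit.
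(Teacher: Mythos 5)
Your proposal is correct and is essentially the argument the paper relies on: the paper itself omits the proof, remarking that its transformed Runge-Kutta methods are a special case of the stochastic Poisson integrators of \cite{Hongetal2021}, and the proof there is exactly your conjugation $\Phi=\theta^{-1}\circ\bar{\Phi}\circ\theta$ combined with the block computation showing $\bigl(\partial\bar{y}_{k+1}/\partial\bar{y}_k\bigr)\,\bar{B}\,\bigl(\partial\bar{y}_{k+1}/\partial\bar{y}_k\bigr)^\top=\bar{B}$ and the identity $B(y_{k+1})=E\bar{B}E^\top$. You also resolve the one delicate point correctly: since the $\mathcal{C}$-components of every $g_l$ vanish, all stage values carry $\mathcal{C}=\mathcal{C}_k$, so the $Z$-update is the symplectic Runge-Kutta map with frozen parameter $\mathcal{C}_k$ (Theorem \ref{th3.1} then gives $MJ^{-1}M^\top=J^{-1}$ almost surely), and the zero blocks of $\bar{B}$ annihilate the coupling block $N$, so the degenerate structure matrix causes no loss.
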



The proof of the above theorem is omitted here. The transformed Runge-Kutta methods we propose here are a special case of the one presented in reference \cite{Hongetal2021}. 

In the following, we demonstrate that the diagonal implicit Runge-Kutta methods for SPSs preserve Casimir functions, quadratic Hamiltonian functions, and the Poisson structure.
\begin{tm}\label{th4.2}
Suppose that $B(y)$ is a constant structure matrix, i.e., $B(y) \equiv B$, then the diagonal implicit Runge-Kutta methods preserve the Casimir functions.
\end{tm}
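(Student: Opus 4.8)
The plan is to exploit two facts: that the defining relation of a Casimir function, $\nabla C(y)^\top B = 0$, holds at \emph{every} point $y$ (because $B$ is constant), and that the one-step increment of the scheme lies entirely in the range of $B$. First I would record the form of the update. Since $B(y)\equiv B$ is constant, we have $f_l(y)=B\nabla H_l(y)$ for each $l=0,\dots,m$, so the increment produced by the last line following \eqref{3.10} can be written as
\begin{equation*}
y_{k+1}-y_k = B\,\eta_k, \qquad \eta_k := h\sum_{i=1}^s b_i^0\,\nabla H_0(Y_i)+\sum_{r=1}^m\sum_{i=1}^s b_i^r\,\nabla H_r(Y_i)\,J_k^r,
\end{equation*}
where the constant matrix $B$ has simply been factored out to the left of every stage term. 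The key structural observation is thus that $y_{k+1}-y_k\in\operatorname{range}(B)$ for every realization of the noise increments $J_k^r$.

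Next I would compare $C(y_{k+1})$ and $C(y_k)$ by integrating along the straight segment joining the two iterates. Writing $\xi_t := y_k + t(y_{k+1}-y_k)$ for $t\in[0,1]$ and using the fundamental theorem of calculus,
\begin{equation*}
C(y_{k+1})-C(y_k)=\int_0^1 \nabla C(\xi_t)^\top (y_{k+1}-y_k)\,dt = \int_0^1 \nabla C(\xi_t)^\top B\,\eta_k\,dt.
\end{equation*}
By Definition \ref{ef2}, $\nabla C(\xi_t)^\top B = 0$ at every point $\xi_t$ (this is where the constant-$B$ hypothesis is essential: the Casimir relation is available at each $\xi_t$ along the path, not merely at $y_k$). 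Hence the integrand vanishes identically and $C(y_{k+1})=C(y_k)$. Since this holds for every sample path, the conclusion is almost sure, as required.

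The argument is short, and the only point that requires care is the factoring step: it is precisely the constancy of $B$ that lets us pull $B$ out of the stage sum so that the whole increment lands in $\operatorname{range}(B)$, and that lets us invoke $\nabla C^\top B=0$ pointwise along the connecting segment. I expect no serious obstacle here; the main thing to verify is that the specific diagonal-implicit coefficients play no role beyond guaranteeing that $\{Y_i\}$ and $y_{k+1}$ are well defined. The conservation of $C$ follows from the range structure of the increment alone, so the same reasoning would in fact apply to any Runge-Kutta scheme of the form \eqref{3.1} once $B$ is constant.
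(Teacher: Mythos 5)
Your proposal is correct and takes essentially the same approach as the paper: both arguments rest on factoring the constant matrix $B$ out of the one-step increment so that $y_{k+1}-y_k=B\,\eta_k$, and then invoking the pointwise Casimir identity $\nabla C(y)^\top B=0$ along the segment joining $y_k$ and $y_{k+1}$ --- the paper uses the mean value theorem at an intermediate point $y_k^*=y_k+\theta(y_{k+1}-y_k)$ where you use the integral form of the fundamental theorem of calculus, a purely cosmetic difference. Your closing observation that the specific diagonal-implicit coefficients play no role is likewise consistent with the paper's proof, which nowhere uses them.
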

\begin{proof}
It suffices to prove $C(y_{k+1})=C(y_k)$, that is
	\begin{align*}
		C(y_{k+1})-C(y_k)=&\nabla C(y_k^*)^\top \left(y_{k+1}-y_k \right)\\
		=&\nabla C(y_k^*)^\top \left(h\sum_{i=1}^sb_{i}^0f_0(Y_i)+\sum_{r=1}^m\sum_{i=1}^sb_{i}^rf_r(Y_i)J_k^r \right)\\
		=&\nabla C(y_k^*)^\top B \left(h\sum_{i=1}^sb_{i}^0\nabla H_0(Y_i)+\sum_{r=1}^m\sum_{i=1}^sb_{i}^r\nabla H_r(Y_i)J_k^r \right)\\
		=&0,
	\end{align*}
	where $y_k^*=y_k+\theta \left(y_{k+1}-y_k \right) \, \left(\theta \in (0,1) \right)$. Thus the conclusion of this theorem is proved.
\end{proof}
Considering the quadratic Hamiltonian functions $H_i(y) = \frac{1}{2}y^\top S_i y(i=1, \cdots ,s)$ with constant symmetric matrices $S_i$, we will prove that the diagonal implicit Runge-Kutta methods preserve these $H_i(y)$ under certain conditions. 
\begin{tm}\label{th4.3}
Suppose that $B(y)$ is a constant structure matrix, i.e., $B(y) \equiv B$,  and $\nabla H_i(y)^\top B \nabla H_j(y) = 0$, then the diagonal implicit Runge-Kutta methods preserve the quadratic Hamiltonian functions $H_i(y)(i=1, \cdots ,s)$.
\end{tm}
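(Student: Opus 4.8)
The plan is to use the representation of the method as a composition of $s$ midpoint substeps given in Theorem \ref{th3.3}, and to prove that each substep conserves every quadratic Hamiltonian; conservation across the whole step then follows by telescoping. Throughout I write $B(y)\equiv B$, so $f_l(y)=BS_ly$ and $\nabla H_l(y)=S_ly$, and I record two elementary facts. First, since each $S_l$ is symmetric, the polarization identity
\begin{equation*}
\tfrac12 a^\top S_l a-\tfrac12 b^\top S_l b=(a-b)^\top S_l\,\tfrac{a+b}{2}
\end{equation*}
holds for all $a,b\in\R^d$. Second, the hypothesis $\nabla H_i(y)^\top B\nabla H_j(y)=0$ is the pointwise algebraic identity $y^\top S_iBS_j\,y=0$ for all $y$; because the $H_i$ are quadratic this holds everywhere, in particular at the (random) stage values, which is what makes the argument go through.

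Next I would analyze the $i$-th midpoint substep of \eqref{3.7}. With $Y_i=\tfrac12(y_{k+i/s}+y_{k+(i-1)/s})$ the stage value from \eqref{3.9}, its increment can be written as
\begin{equation*}
y_{k+\frac{i}{s}}-y_{k+\frac{i-1}{s}}=BM_iY_i,\qquad M_i:=b_i^0hS_0+\sum_{r=1}^m b_i^rS_rJ_k^r,
\end{equation*}
where $M_i$ is symmetric because every $S_r$ is. Applying the polarization identity with $a=y_{k+i/s}$ and $b=y_{k+(i-1)/s}$, and then using $M_i^\top=M_i$ together with $B^\top=-B$, gives
\begin{equation*}
H_l\!\left(y_{k+\frac{i}{s}}\right)-H_l\!\left(y_{k+\frac{i-1}{s}}\right)=(BM_iY_i)^\top S_lY_i=Y_i^\top M_iB^\top S_lY_i=-\,Y_i^\top M_iBS_lY_i.
\end{equation*}

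Finally I would expand $M_i$ and show the right-hand side vanishes term by term: it is a combination of the quadratic forms $Y_i^\top S_pBS_lY_i$ with $p\in\{0,1,\dots,m\}$. For $p=l$ the matrix $S_lBS_l$ is skew-symmetric (again by $B^\top=-B$), so that form is identically zero; for $p\neq l$ it vanishes by the hypothesis $\nabla H_p^\top B\nabla H_l=0$. Hence $H_l$ is unchanged across each substep, and summing over $i=1,\dots,s$ telescopes to $H_l(y_{k+1})=H_l(y_k)$ for every Hamiltonian index $l$. I expect the only real content to be the reduction in the second step — rewriting the substep increment as $BM_iY_i$ with $M_i$ symmetric and $Y_i$ the exact midpoint — since this is precisely what converts energy conservation into the stated algebraic condition on $S_iBS_j$; the skew/symmetric cancellations and the telescoping are then routine. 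A minor point worth flagging is that the hypothesis must be read as holding for every pair involving the target Hamiltonian $H_l$, the diagonal case $p=l$ being automatic.
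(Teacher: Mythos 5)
Your proof is correct, but it follows a genuinely different route from the paper's. The paper argues directly on the $s$-stage Runge-Kutta form \eqref{3.1}: it expands $\tfrac12 y_{k+1}^\top S_i y_{k+1}$, substitutes $y_k=Y_i-h\sum_j a_{ij}^0f_0(Y_j)-\sum_{r,j}a_{ij}^rf_r(Y_j)J_k^r$, cancels the quadratic cross terms through the symplecticity coefficient conditions \eqref{3.2} (which the tableau \eqref{3.6} satisfies, cf.\ Remark \ref{rk2}), and kills the remaining linear terms with the hypothesis $\nabla H_i^\top B\nabla H_j=0$ --- the classical quadratic-invariant argument for symplectic Runge-Kutta schemes. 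You instead exploit the composition structure established in Theorem \ref{th3.3}: each midpoint substep of \eqref{3.7} has increment $BM_iY_i$ with $M_i$ symmetric and $Y_i$ the exact midpoint, so polarization reduces conservation of $H_l$ to the vanishing of the forms $Y_i^\top S_pBS_lY_i$, handled by skew-symmetry of $B$ when $p=l$ and by the hypothesis when $p\neq l$; telescoping over the $s$ substeps finishes. Your route is more elementary --- no coefficient algebra at all, and the mechanism is transparent --- but it is tied to the particular composed tableau \eqref{3.6}, whereas the paper's computation applies verbatim to any stochastic Runge-Kutta method whose coefficients satisfy \eqref{3.2}. One small over-assumption in your write-up: setting $f_l(y)=BS_ly$ for every $l$ tacitly makes the drift Hamiltonian $H_0$ quadratic as well, which the theorem does not require. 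The fix is cosmetic: write the substep increment as $Bv_i$ with $v_i=b_i^0h\,\nabla H_0(Y_i)+\sum_{r=1}^m b_i^r\,\nabla H_r(Y_i)J_k^r$, so that $H_l\bigl(y_{k+\frac{i}{s}}\bigr)-H_l\bigl(y_{k+\frac{i-1}{s}}\bigr)=-v_i^\top B\nabla H_l(Y_i)$, and each term vanishes pointwise at the stage value $Y_i$ exactly as in your argument. Your closing caveat about how to read the hypothesis (all pairs involving the target Hamiltonian, the diagonal pair being automatic by skew-symmetry of $B$) is apt and matches the paper's tacit usage of the same pointwise identities at the stage values.
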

\begin{proof}
	We need to show that $H_i(y_{k+1})-H_i(y_k)=0$. According to the $H_i(y) = \frac{1}{2}y^\top S_i y$,  we obtain
	\begin{equation}\label{4.1}
			\begin{aligned}
			\frac{1}{2}y_{k+1}^\top S_iy_{k+1}=&\frac{1}{2}\left (y_k+h\sum_{i=1}^sb_{i}^0f_0(Y_i)+\sum_{r=1}^m\sum_{i=1}^sb_{i}^rf_r(Y_i)J_k^r\right)^\top S_i\\
			&\times\left (y_k+h\sum_{i=1}^sb_{i}^0f_0(Y_i)+\sum_{r=1}^m\sum_{i=1}^sb_{i}^rf_r(Y_i)J_k^r\right)
		\end{aligned}
	\end{equation}
Substituting $y_k=Y_i-h\sum_{j=1}^sa_{ij}^0f_0(Y_j)-\sum_{r=1}^m\sum_{j=1}^sa_{ij}^rf_r(Y_j)J_k^r$ into \eqref{4.1}, and combining with $\nabla H_i(y)^\top B \nabla H_j(y) = 0$, it follows that 
	\begin{align*}
			\frac{1}{2}y_{k+1}^\top S_iy_{k+1}=&\frac{1}{2}y_k^\top S_iy_k+\frac{1}{2}hY_i^\top S_i\left(\sum_{i=1}^sb_{i}^0f_0(Y_i)\right)+\frac{1}{2}Y_i^\top S_i\left(\sum_{r=1}^m\sum_{i=1}^sb_{i}^rf_r(Y_i)J_k^r\right)\\
			&+\frac{1}{2}h\left(\sum_{i=1}^sb_{i}^0f_0(Y_i)\right)^\top S_iY_i+\frac{1}{2}\left(\sum_{r=1}^m\sum_{i=1}^sb_{i}^rf_r(Y_i)J_k^r\right)^\top S_iY_i\\
			&+\frac{1}{2}h^2\left[b_i^0b_j^0- b_i^0a_{ij}^0- b_j^0a_{ji}^0\right]\left(\sum_{i=1}^sf_0(Y_i)\right)^\top S_i\left(\sum_{j=1}^sf_0(Y_j)\right)\\
			&+\frac{1}{2}h\left[b_i^0b_j^r- b_i^0a_{ij}^r- b_j^ra_{ji}^0\right]\left(\sum_{r=1}^m\sum_{j=1}^sf_r(Y_j)J_k^r\right)^\top S_i\left(\sum_{i=1}^sf_0(Y_i)\right)\\
			&+\frac{1}{2}h\left[b_i^0b_j^r- b_i^0a_{ij}^r- b_j^ra_{ji}^0\right]\left(\sum_{i=1}^sf_0(Y_i)\right)^\top S_i\left(\sum_{r=1}^m\sum_{j=1}^sf_r(Y_j)J_k^r\right)\\
			&+\frac{1}{2}\left[b_i^rb_j^\zeta- b_i^ra_{ij}^\zeta- b_j^\zeta a_{ji}^r\right]\left(\sum_{r=1}^m\sum_{i=1}^sf_r(Y_i)J_k^r\right)^\top S_i\left(\sum_{\zeta=1}^m\sum_{j=1}^sf_r(Y_j)J_k^r\right)\\
			=&\frac{1}{2}y_k^\top S_iy_k.
\end{align*}

\end{proof}

\begin{tm}\label{th4.4}
	Suppose that $B(y)$ is a constant structure matrix, i.e., $B(y) \equiv B$, then the diagonal implicit Runge-Kutta methods preserve the Poisson structure.
\end{tm}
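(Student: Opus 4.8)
The plan is to reduce the claim to the symplecticity result already available in Theorem~\ref{th3.1}, exploiting the fact that a constant structure matrix admits a \emph{linear} Darboux transformation. Since $B(y)\equiv B$ is a constant skew-symmetric matrix of constant rank $2n$, the canonical form theorem for skew-symmetric matrices under congruence provides a constant invertible $T\in\R^{d\times d}$ with $TBT^\top=\hat J$, where $\hat J=\begin{pmatrix} J^{-1} & 0\\ 0 & 0\end{pmatrix}$ carries an $l\times l$ zero block, $l=d-2n$. Thus the transformation $\theta(y)=Ty$ provided by the Darboux-Lie Theorem~\ref{th3.2} is affine, and its components $P_i,Q_j,C_k$ are linear in $y$.

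First I would record the key covariance property: the method \eqref{3.1} commutes with affine changes of variables. Setting $\bar y=Ty$ and $\bar f_l(\bar y)=Tf_l(T^{-1}\bar y)=\hat J\nabla_{\bar y}\bar H_l$ with $\bar H_l(\bar y)=H_l(T^{-1}\bar y)$, substituting $\bar Y_i=TY_i$ into the stage equations of \eqref{3.1} applied to $\bar f_l$ reproduces, after left multiplication by $T^{-1}$, the original stage equations; hence $\bar y_{k+1}=Ty_{k+1}$ and $\partial\bar y_{k+1}/\partial\bar y_k=T\,\Psi\,T^{-1}$ with $\Psi=\partial y_{k+1}/\partial y_k$. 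Equivalently, in the constant-$B$ case the transformed Runge-Kutta method \eqref{3.5} built from $\theta=T$ collapses exactly onto \eqref{3.1}, i.e. onto the diagonal implicit scheme \eqref{3.6}, since $v_l(y)=\theta_y f_l(y)=Tf_l(y)$ and $\theta^{-1}(w)=T^{-1}w$. Consequently the present statement can be read off as the constant-$B$ specialization of Theorem~\ref{th4.1}.

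To make the argument self-contained through Theorem~\ref{th3.1}, I would next split the $\bar y$-system into its nondegenerate $2n$-block, which is a genuine stochastic Hamiltonian system \eqref{3.4} with symplectic matrix $J$ and the frozen Casimir coordinates as parameters, and its degenerate $l$-block, whose coordinates are constant so that the corresponding rows of $\bar\Psi=T\Psi T^{-1}$ equal $(0\;\;I)$. Because the coefficients of \eqref{3.6} satisfy \eqref{3.2} (Remark~\ref{rk2}), Theorem~\ref{th3.1} applies to the top block and gives $\bar\Psi_{11}^\top J\,\bar\Psi_{11}=J$, which is equivalent to the Poisson-map form $\bar\Psi_{11}J^{-1}\bar\Psi_{11}^\top=J^{-1}$ (invert the relation and use that symplectic matrices form a group). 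Assembling the blocks yields $\bar\Psi\,\hat J\,\bar\Psi^\top=\hat J$; transforming back with $\bar\Psi=T\Psi T^{-1}$ and $\hat J=TBT^\top$ then gives $\Psi B\Psi^\top=B$, which is exactly the Poisson-structure condition of Theorem~\ref{th2.1} since $B(y_{k+1})=B(y_k)=B$.

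The main obstacle is the bookkeeping at the interface between the symplectic statement and the Poisson statement: one must carefully convert the preserved two-form $\bar\Psi_{11}^\top J\bar\Psi_{11}=J$ into the congruence form $\Psi B\Psi^\top=B$ and correctly treat the degenerate zero block so that Theorem~\ref{th3.1}, stated for invertible $J$, still delivers the conclusion on the full phase space. As an alternative that avoids the change of variables, I would differentiate \eqref{3.1} in $y_k$ to obtain the variational equations for $Z_i=\partial Y_i/\partial y_k$ and $\Psi$, use $f_l'(y)=B H_l''(y)$ together with the identity that $BH_l''B$ is symmetric (skew $\cdot$ symmetric $\cdot$ skew), and expand $\Psi B\Psi^\top-B$; substituting the stage variational equations to eliminate the bare identity terms should collapse the remainder into precisely the brackets $b_i^0b_j^0-b_i^0a_{ij}^0-b_j^0a_{ji}^0$, $b_i^0b_j^r-b_i^0a_{ij}^r-b_j^ra_{ji}^0$ and $b_i^rb_j^\zeta-b_i^ra_{ij}^\zeta-b_j^\zeta a_{ji}^r$ of \eqref{3.2}, exactly as in the proof of Theorem~\ref{th4.3}, all of which vanish. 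Here the delicate part is the same substitution that produced those brackets in Theorem~\ref{th4.3}, now carried out at the level of Jacobians.
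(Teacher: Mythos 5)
Your main argument is correct, but it takes a genuinely different route from the paper. The paper proves Theorem~\ref{th4.4} by a direct variational computation in the original variables: it expands $\frac{\partial y_{k+1}}{\partial y_k}B\bigl(\frac{\partial y_{k+1}}{\partial y_k}\bigr)^\top$ into \eqref{4.8}, kills the quadratic remainder using the symplecticity conditions \eqref{3.2} (Remark~\ref{rk2}), and is then still left with the first-order terms $X_iB\left(D_i^lX_i\right)^\top+D_i^lX_iB X_i^\top$, which it disposes of by establishing the per-stage identity $X_iBX_i^\top=B$ --- by explicit inversion for $s=1$ and via the implicit function theorem for $s\geq 2$; this is precisely where the lower-triangular, composition-of-midpoints structure of \eqref{3.6} enters. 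You instead conjugate by a constant linear Darboux congruence $TBT^\top=\hat J$, observe that Runge--Kutta maps are affine-equivariant so that the transformed method \eqref{3.5} with $\theta=T$ collapses onto \eqref{3.1}/\eqref{3.6}, apply Theorem~\ref{th3.1} to the nondegenerate $2n$-block with the Casimir coordinates as frozen parameters, and reassemble; in effect you realize Theorem~\ref{th4.4} as the constant-$B$ specialization of Theorem~\ref{th4.1}, a connection the paper states for the non-constant case but does not exploit here. Your block bookkeeping is sound: the bottom rows of $\bar\Psi$ are $(0\;\;I)$ because the RK scheme reproduces $\mathcal{C}_{k+1}=\mathcal{C}_k$ exactly (the stage values of the Casimir coordinates equal $\mathcal{C}_k$, so the top block is the RK method for the SHS with frozen parameter), and the nonvanishing cross block $\partial Z_{k+1}/\partial\mathcal{C}_k$ is harmless since it multiplies only zero blocks of $\hat J$. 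What each approach buys: yours is shorter, avoids the per-stage analysis entirely, and applies verbatim to \emph{any} stochastic RK scheme satisfying \eqref{3.2}, not only the diagonal family; the paper's computation is intrinsic (no change of coordinates, no appeal to the canonical form) and exhibits explicitly how the diagonal structure enforces the Poisson property stage by stage.

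One caution about your fallback "direct expansion" alternative: it is not as routine as you suggest. After the brackets of \eqref{3.2} are eliminated, the surviving terms $X_iB\left(D_i^lX_i\right)^\top+D_i^lX_iBX_i^\top$ are of the form $M_i\left(D_i^l\right)^\top+D_i^lM_i$ with $M_i=X_iBX_i^\top$; unlike the analogous terms in Theorem~\ref{th4.3} (or in the two-form version $\Psi^\top J\Psi=J$ of symplecticity, where one meets $X_i^\top\left(JD_i+D_i^\top J\right)X_i$), they are not sandwiched as $X_i(\,\cdot\,)X_i^\top$, so the identity \eqref{4.4} cannot be applied directly and the remainder does not collapse into the brackets of \eqref{3.2} alone. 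Additional per-stage information about the maps $y_k\mapsto Y_i$ is required --- supplying it is exactly the nontrivial second half of the paper's proof --- so if you pursue that alternative you must fill this step rather than expect it to follow as in Theorem~\ref{th4.3}. Your primary Darboux-conjugation route avoids this issue altogether, which is a genuine advantage.
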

\begin{proof}
Denote $X_i = \frac{\partial Y_i}{\partial y_k} (i = 1, \cdots, s)$, $D_i^l = Df_l(Y_i) (l = 0, \cdots, m; i = 1, \cdots, s)$, where $Df_l$ represents the derivative of the function $f_l$, then
		\begin{eqnarray}
			\frac{\partial y_{k+1}}{\partial y_k}&=&I+h\sum_{i=1}^sb_i^0D_i^0X_i+\sum_{r=1}^m\sum_{i=1}^sb_i^rJ_k^rD_i^rX_i, \label{4.2} \\
          \frac{\partial Y_i}{\partial y_k}&=&I+h\sum_{j=1}^sa_{ij}^0D_j^0X_j+\sum_{r=1}^m\sum_{j=1}^sa_{ij}^rJ_k^rD_j^rX_j, \label{4.3}
		\end{eqnarray}
	and
		\begin{equation}\label{4.4}
			B\left(D_i^l\right)^\top +D_i^lB=B(B\nabla^2H_l)^\top+(B\nabla^2H_l)B=0,
	\end{equation}
where the Hessian matrices $\nabla ^2H_l(l=0, \cdots, m)$ are symmetric. Additionally, according to \eqref{4.2}, we have 
\begin{equation}\label{4.5}
  \begin{aligned}
  	\frac{\partial y_{k+1}}{\partial y_k}B{\left(\frac{\partial y_{k+1}}{\partial y_k}\right)}^\top =&\left(I+h\sum_{i=1}^sb_i^0D_i^0X_i+\sum_{r=1}^m\sum_{i=1}^sb_i^rJ_k^rD_i^rX_i\right)B\\
  	&\times {\left(I+h\sum_{i=1}^sb_i^0D_i^0X_i+\sum_{r=1}^m\sum_{i=1}^sb_i^rJ_k^rD_i^rX_i\right)}^\top
  \end{aligned}
\end{equation}
Inserting \eqref{4.3} into \eqref{4.5}, it yields
	\begin{equation}\label{4.6}
		\begin{aligned}
			\frac{\partial y_{k+1}}{\partial y_k}B{\left(\frac{\partial y_{k+1}}{\partial y_k}\right)}^\top =&B+h\sum_{i=1}^sb_i^0[B{\left(D_i^0X_i\right)}^\top+D_i^0X_iB]\\
			&+\sum_{r=1}^m\sum_{i=1}^sb_i^rJ_k^r[B{\left(D_i^rX_i\right)}^\top+D_i^rX_iB]\\
			&+h^2\left( \sum_{i=1}^s b_i^0 D_i^0 X_i \right) B \left( \sum_{i=1}^s b_i^0 D_i^0 X_i \right)^\top\\
			&+h\left( \sum_{i=1}^s b_i^0 D_i^0 X_i \right) B \left( \sum_{r=1}^m\sum_{i=1}^sb_i^rJ_k^rD_i^rX_i \right)^\top\\
			&+ h\left( \sum_{r=1}^m\sum_{i=1}^sb_i^rJ_k^rD_i^rX_i \right) B\left( \sum_{i=1}^s b_i^0 D_i^0 X_i \right)^\top\\
			&+\left( \sum_{r=1}^m\sum_{i=1}^sb_i^rJ_k^rD_i^rX_i \right) B\left( \sum_{r=1}^m\sum_{i=1}^sb_i^rJ_k^rD_i^rX_i \right)^\top.
		\end{aligned}
	\end{equation}
Substituting \eqref{4.3} into $X_iB\left( D_i^lX_i \right)^\top$ and $D_i^lX_iB\left(X_i \right)^\top$, we get
	\begin{equation}\label{4.7}
		\begin{aligned}
			X_iB\left( D_i^lX_i \right)^\top&=B\left( D_i^lX_i \right)^\top+h\sum_{j=1}^sa_{ij}^0D_j^0X_jB\left( D_i^lX_i \right)^\top+\sum_{r=1}^m\sum_{j=1}^sa_{ij}^rJ_k^rD_j^rX_jB\left( D_i^lX_i \right)^\top,\\
			D_i^lX_iB\left(X_i \right)^\top&=D_i^lX_iB+h\sum_{j=1}^sa_{ij}^0D_i^lX_i\left( D_j^0X_j \right)^\top+\sum_{r=1}^m\sum_{j=1}^sa_{ij}^rJ_k^rD_i^lX_iB\left( D_j^rX_j \right)^\top.
		\end{aligned}
	\end{equation}
From \eqref{4.6} and \eqref{4.7}, we know that
	\begin{equation}\label{4.8}
		\begin{aligned}
			\frac{\partial y_{k+1}}{\partial y_k}B{\left(\frac{\partial y_{k+1}}{\partial y_k}\right)}^\top=&B+h\sum_{i=1}^sb_i^0[ X_iB\left( D_i^0X_i \right)^\top+D_i^0X_iB\left(X_i \right)^\top]\\
			&+\sum_{r=1}^m\sum_{i=1}^sb_i^rJ_k^r[X_iB\left( D_i^rX_i \right)^\top+D_i^rX_iB\left(X_i \right)^\top]\\
			&+h^2\sum_{i,j=1}^s[b_i^0b_j^0- b_i^0a_{ij}^0- b_j^0a_{ji}^0]D_j^0X_jB\left( D_i^0X_i \right)^\top\\
			&+h\sum_{r=1}^m\sum_{i,j=1}^s[b_i^0b_j^r- b_i^0a_{ij}^r- b_j^ra_{ji}^0]D_j^rX_jB\left( D_i^0X_i \right)^\top\\
			&+h\sum_{r=1}^m\sum_{i,j=1}^s[b_i^0b_j^r- b_i^0a_{ij}^r- b_j^ra_{ji}^0]J_k^rD_i^0X_iB\left(D_j^rX_j \right)^\top\\
			&+\sum_{r,\zeta=1}^m\sum_{i,j=1}^s[b_i^rb_j^\zeta- b_i^ra_{ij}^\zeta- b_j^\zeta a_{ji}^r]J_k^\zeta J_k^rD_j^\zeta X_jB\left(D_i^rX_i \right)^\top.
		\end{aligned}
	\end{equation}
Based on Remark \ref{rk2}, it is only necessary to prove that $X_iB\left( D_i^lX_i \right)^\top+D_i^lX_iB\left(X_i \right)^\top=0$. Without loss of generality, we consider the case of $m=1$. Combining \eqref{4.4}, we only need to prove that the diagonal implicit Runge-Kutta methods satisfy $X_iBX_i^T=B$ at every stage.
	\begin{itemize}
		\item When $s=1$, form the first equation of \eqref{3.10}, we have	
		\begin{equation*}
			X_1=I+\frac{b_1^0}{2}hD_1^0X_1+\frac{b_1^1}{2}J_k^1 D_1^1X_1 ,\\
		\end{equation*}
		therefore
		\begin{equation*}
			X_1=\left (I-\frac{b_1^0}{2}hD_1^0-\frac{b_1^1}{2}J_k^1 D_1^1\right)^{-1}\colon=K^{-1}.
		\end{equation*}
Additionally, by applying the associativity of matrix multiplication and the skew-symmetry of the Poisson bracket \eqref{2.4}, we derive 
		\begin{equation}\label{4.9}
				D_1^0B\left(D_1^0\right)^T=0 ,\quad D_1^1B\left(D_1^1\right)^T=0,
		\end{equation}
		and
		\begin{equation}\label{4.10}
				D_1^0B\left(D_1^1\right)^\top+D_1^1B\left(D_1^0\right)^\top=0.
		\end{equation}
		To prove $X_1BX_1^\top=B$, it is equivalent to proving $KBK^\top=B$. According to
		 \eqref{4.4}, \eqref{4.9} and \eqref{4.10}, 
		\begin{equation}\label{4.11}
			\begin{aligned}
				KBK^T=&\left (I-\frac{b_1^0}{2}hD_1^0-\frac{b_1^1}{2}J_k^1 D_1^1\right)B\left (I-\frac{b_1^0}{2}h\left(D_1^0\right)^T-\frac{b_1^1}{2}J_k^1 \left(D_1^1\right)^T\right)\\
				=&B-\frac{b_1^0}{2}h[B\left(D_1^0\right)^T+D_1^0B]-\frac{b_1^1}{2}J_k^1[B\left(D_1^1\right)^T+D_1^1B]+\frac{(b_1^0)^2}{4}h^2D_1^0B\left(D_1^0\right)^\top\\
				&+\frac{b_1^0b_1^1}{4}hJ_k^1[D_1^0B\left(D_1^1\right)^T+D_1^1B\left(D_1^0\right)^\top]+\frac{(b_1^1)^2}{4}J_k^1J_k^1D_1^1B\left(D_1^1\right)^\top\\
				=&B.
			\end{aligned}
		\end{equation}
	\item When $s = 2$, consider the second equation of \eqref{3.10} and let
	\begin{equation}\label{4.12}
		\begin{aligned}
			F(y_k, Y_1, Y_2)\colon=&Y_2 -y_k - b_1^0hf_0\left(Y_1 \right)-\frac{b_2^0}{2} h f_0\left(Y_2 \right) - b_1^1 f_1 \left(Y_1 \right) J_k^1-  \frac{b_2^1}{2} f_1 \left(Y_2 \right) J_k^1\\
			=&0.
		\end{aligned}
	\end{equation}
	Using the implicit function theorem, we obtain
	\begin{equation*}
		\frac{\partial Y_2}{\partial y_k}=- \left(\frac{\partial F}{\partial Y_2} \right)^{-1}\left(\frac{\partial F}{\partial y_k} \right).
	\end{equation*}
	Thus, to prove $X_2BX_2^T=B$ is equivalent to show
	\begin{equation}\label{4.13}
		\frac{\partial F}{\partial Y_2}B\left(\frac{\partial F}{\partial Y_2} \right)^\top=	\frac{\partial F}{\partial y_k}B\left(\frac{\partial F}{\partial y_k} \right)^\top.
	\end{equation}
	By \eqref{4.12}, we have
	\begin{equation*}
		\frac{\partial F}{\partial y_k}=-I,
	\end{equation*}
	\begin{equation*}
		\frac{\partial F}{\partial Y_2}=I-\frac{b_2^0}{2} hD_2^0-\frac{b_2^1}{2} J_k^1 D_2^1,
	\end{equation*}
	then
	\begin{equation*}
		\frac{\partial F}{\partial y_k}B\left(\frac{\partial F}{\partial y_k} \right)^\top=B.
	\end{equation*}
	Similar to the process of calculating \eqref{4.11}, there are
	\begin{equation*}
			\frac{\partial F}{\partial Y_2}B\left(\frac{\partial F}{\partial Y_2} \right)^\top=B.
	\end{equation*}
	Therefore, \eqref{4.13} holds, i.e., $X_2BX_2^\top=B$.
	\item When $s=3, 4, \cdots$, each stage $s$ of the diagonal implicit Runge-Kutta methods satisfies $X_iBX_i^\top = B$ similar to the case of $s=2$. To avoid tediousness, we omit the proof of these cases.
		\end{itemize}
	Therefore, we obtain that the diagonal implicit Runge-Kutta methods preserve the Poisson structure.
\end{proof}

\begin{rk}\label{rk4}
	The midpoint method preserves the Poisson structure when applied to SPSs with the constant structural matrix $B(y) \equiv B$. To illustrate this, consider the case with $m=1$, where the coefficients are set to $a_{11}^0 = a_{11}^1 = \frac{1}{2}$ and $b_1^0 = b_1^1 = 1$. The  diagonal implicit Runge-Kutta methods become to
	\begin{equation*}
		\begin{aligned}
			Y_1 &= y_k + h\frac{1}{2}f_0(Y_1) + \frac{1}{2}f_1(Y_1)J_k^r,\\
			y_{k+1} &= y_k + hf_0(Y_1) + f_1(Y_1)J_k^1,
		\end{aligned}
	\end{equation*}
and the proof process is identical to that of Theorem \ref{th4.4} when $s = 1$. In addition, reference \cite{Wangetal2022a} employs alternative proof methods to demonstrate that the midpoint method preserves the Poisson structure.
\end{rk}

\section{Numerical experiments}
In this section, we present some numerical examples to validate the structure-preserving properties of the two proposed numerical methods. In all experiments, the two-stage diagonal implicit symplectic Runge-Kutta methods are applied to the stochastic rigid body system and the linear stochastic Poisson system to verify preserving the Poisson structure, Casimir functions, and quadratic Hamiltonian functions.

\subsection{Stochastic rigid body systems}
\label{sub5.1}
Consider the following $3$-dimensional stochastic rigid body system:
\begin{equation}\label{5.1}
	\begin{aligned}
		dy(t)&= \begin{pmatrix}
			0 & -y_{3} & y_{2} \\
			y_{3} & 0 & -y_{1} \\
			-y_{2} & y_{1} & 0
		\end{pmatrix}\nabla H(y(t))\left(dt+c\circ dW(t)\right),\\
	y(0)&=y_0,
	\end{aligned}
\end{equation}
where $y=(y_1, y_2, y_3)^\top$, $y_0=(y_1^0, y_2^0, y_3^0)^\top $, and $c$ is a non-zero constant. The Hamiltonian function is given by $H(y)=\frac{1}{2}\left(\frac{y_{1}^{2}}{I_{1}}+\frac{y_{2}^{2}}{I_{2}}+\frac{y_{3}^{2}}{I_{3}}\right)$, here $I_1$, $I_2$, and $I_3$ are the moments of inertia. 

The system exhibits the Casimir function
\begin{equation*}
	C(y)=\frac{1}{2}\left(y_{1}^{2}+y_{2}^{2}+y_{3}^{2}\right) \equiv \frac{1}{2}\left(\left(y_{1}^{0}\right)^{2}+\left(y_{2}^{0}\right)^{2}+\left(y_{3}^{0}\right)^{2}\right)=: \mathcal{C}.
\end{equation*}

According to coordinate transformation $\bar{y}=\theta (y)$ (see \cite{Hongetal2021}), we get
\begin{equation}\label{5.2}
	\bar{y}_1 = y_2, \quad
	\bar{y}_2 = \arctan \left(\frac{y_3}{y_1}\right), \quad \bar{y}_3 = \mathcal C,
\end{equation}
by solving the equations:
\begin{equation*}
	\begin{split}
		&\{\bar y_1,\bar y_1\}=0,\quad \{\bar y_1,\bar y_2\}=-1, \quad \{\bar y_1,\bar y_3\}=0,\\
		&\{\bar y_2,\bar y_1\}=1,\quad \{\bar y_2,\bar y_2\}=0, \quad \{\bar y_2,\bar y_3\}=0,\\
		&\{\bar y_3,\bar y_1\}=0,\quad \{\bar y_3,\bar y_2\}=0, \quad \{\bar y_3,\bar y_3\}=0.
	\end{split}
\end{equation*}

Defining $(\bar y_1, \bar y_2)=(P, Q)$, we obtain the stochastic Hamiltonian system
\begin{align}\label{5.3}
	d\begin{pmatrix}
		P\\
		Q
	\end{pmatrix}=
	\begin{pmatrix}
		0& -1\\
		1& 0
	\end{pmatrix}
	\nabla K(P, Q)(dt+c\circ dW(t)),
\end{align}
where $$K(P, Q)=
\frac{1}{2I_1} (2\mathcal C-P^2)\cos^2 (Q)
+\frac{1}{2I_2} P^2
+\frac{1}{2I_3} (2\mathcal C-P^2)\sin^2 (Q).$$

Taking $s=2$ in \eqref{3.6} and then applying it to \eqref{5.3}, we derive the following symplectic method
\begin{equation*}
	\begin{split}
		P_{n+1}=&P_n+h\sum_{i=1}^sb_i^0f(p_i, q_i)+\sum_{i=1}^sb_i^1cf(p_i, q_i)\Delta W_n,\\
		Q_{n+1}=&Q_n+h\sum_{i=1}^sb_i^0g(p_i, q_i)+\sum_{i=1}^sb_i^1cg(p_i, q_i)\Delta W_n,
	\end{split}
\end{equation*}
for $n=1, \cdots, N-1$ with $P_0=p,$ $Q_0=q$ and
\begin{equation*}
	\begin{split}
		p_i &= P_n+h\sum_{j=1}^sa_{ij}^0f(p_j, q_j)+\sum_{j=1}^sa_{ij}^1cf(p_j, q_j)\Delta W_n, \\
		q_i &= Q_n+h\sum_{j=1}^sa_{ij}^0g(p_j, q_j)+\sum_{j=1}^sa_{ij}^1cg(p_j, q_j)\Delta W_n,
	\end{split}
\end{equation*}
for $i, j=1, 2$, where the coefficients of the symplectic method are taken from Butcher tableau

\begin{align}\label{5.4}
	\renewcommand{\arraystretch}{1.25}
	\setlength{\tabcolsep}{20pt} 
	\begin{array}{|ccccc}
		\frac{1}{8} & 0 & \frac{1}{4}  & 0  \\
		\frac{1}{4} & \frac{3}{8} & \frac{1}{2}  & \frac{1}{4}  \\
		\hline
		\frac{1}{4} & \frac{3}{4} & \frac{1}{2} & \frac{1}{2} \\
	\end{array},
\end{align}
and
\begin{equation*}
	\begin{split}
		f(p, q) &= -\left(\frac{1}{2I_3}-\frac{1}{2I_1}\right)\left(2\mathcal{C}-p^2\right)sin(2q), \\
		g(p, q) &= \left(\frac{1}{I_2}-\frac{cos^2(q)}{I_1}-\frac{sin^2(q)}{I_3}\right)p.
	\end{split}
\end{equation*}

To ensure the solvability of implicit methods, it is necessary to appropriately truncate the Wiener increment $\Delta W_n=\sqrt{h}\xi_n$, $\xi_n\sim \mathcal{N}(0, 1)$. Here, we use the truncation $\Delta \widehat{W_n} := \sqrt{h}\zeta_n$, where
\[
\zeta_n =
\begin{cases}
	\xi_n & \text{if } |\xi_n| \leq A_h, \\
	A_h & \text{if } \xi_n > A_h, \\
	-A_h & \text{if } \xi_n < -A_h,
\end{cases} 
\]
and $A_h = \sqrt{2k|\ln h|}$ for $k \geq 1$. This truncation method is derived from \cite{Milsteinetal2002a}.

Then by using the inverse transformation of \eqref{5.2}, we obtain the transformed Runge-Kutta methods $\{y_n^1, y_n^2, y_n^3\}$ suitable for the original stochastic rigid system \eqref{5.1}, namely,
\begin{align}
	\label{5.5}
	y^1_n= \sqrt{2 \mathcal C - P_n^2} \cos (Q_n), \quad
	y^2_n= P_n, \quad
	y^3_n = \sqrt{2 \mathcal C - P_n^2} \sin (Q_n).
\end{align}

Next, the reference solutions are approximated by the midpoint method and can be expressed in the following component form:
\begin{equation}\label{5.6}
	\begin{pmatrix}
		y_{n+1}^1 \\
		y_{n+1}^2 \\
		y_{n+1}^3
	\end{pmatrix}
	= \begin{pmatrix}
		y_{n}^1 \\
		y_{n}^2 \\
		y_{n}^3
	\end{pmatrix}+\begin{pmatrix}
		0 & -\frac{y_{n}^3+y_{n+1}^3}{2} & \frac{y_{n}^2+y_{n+1}^2}{2} \\
		\frac{y_{n}^3+y_{n+1}^3}{2} & 0 & -\frac{y_{n}^1+y_{n+1}^1}{2} \\
		-\frac{y_{n}^2+y_{n+1}^2}{2} & \frac{y_{n}^1+y_{n+1}^1}{2} & 0
	\end{pmatrix}	\begin{pmatrix}
		\frac{y_{n}^1+y_{n+1}^1}{2I_1} \\
		\frac{y_{n}^2+y_{n+1}^2}{2I_2} \\
		\frac{y_{n}^3+y_{n+1}^3}{2I_3}
	\end{pmatrix}\left(h+c \Delta \widehat{W_n}\right).
\end{equation}

For our experiments, we choose $I_1 = \sqrt2 + \sqrt\frac{2}{1.51}$, $I_2 = \sqrt2 - 0.51\sqrt\frac{2}{1.51}$, $I_3 = 1$, $c= 0.2$, $k=4$, and initial values $y_0=(\frac{1}{\sqrt2}, \frac{1}{\sqrt2}, 0)^\top $. Here, we take $h = 0.01$ as the step size of numerical solutions and use a very small step size of $10^{-5}$ for the reference solutions. 
 
From Figure \ref{pp1}, we observe that the single sample paths for the coordinate components arising from the transformed Runge-Kutta methods coincide well with the reference solutions. Figure \ref{pp2} displays the phase orbit along one sample of the transformed Runge-Kutta methods and the reference solutions. It shows that the phase orbits of both the transformed Runge-Kutta methods and the reference solutions coincide completely. Figure \ref{pp3} presents the Casimir function $C(y_n)=\frac{1}{2}\left((y_n^1)^2+(y_n^2)^2+(y_n^3)^2\right)$ computed by the transformed Runge-Kutta methods and the reference solutions. It is shown that the transformed Runge-Kutta methods can preserve the Casimir function. Figure \ref{pp4} numerically presents that the root mean-square order of the transformed Runge-Kutta methods is 1. Here, the time steps considered are $h=[0.005, 0.01, 0.02, 0.04]$, and 500 samples are taken to approximate the expectation.

\begin{figure}[H]
	\centering
	\includegraphics[width=4in]{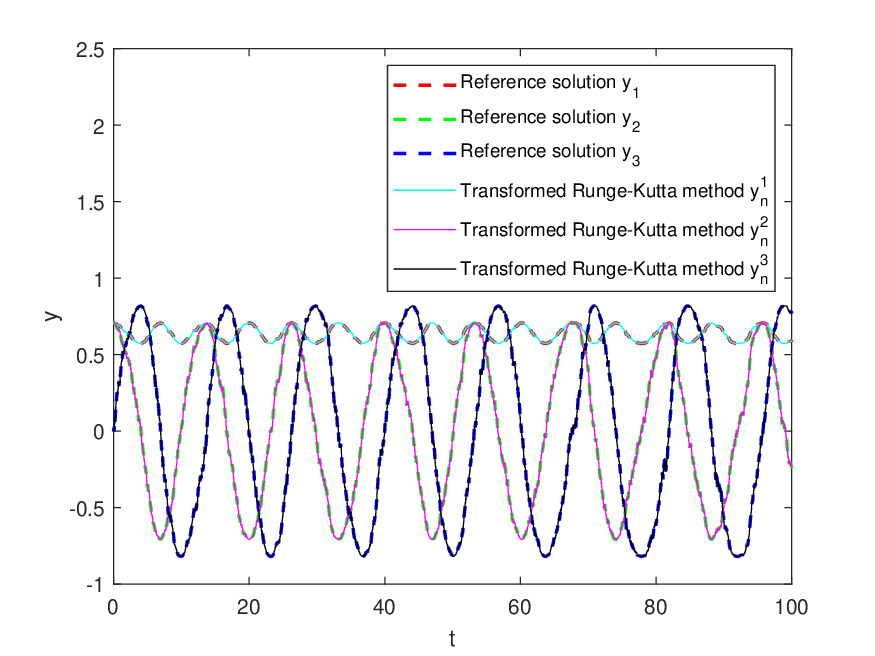}
	\caption{Sample paths produced by \eqref{5.5} and \eqref{5.6}.}\label{pp1}
\end{figure}

\begin{figure}[H]
	\centering
	\includegraphics[width=4in]{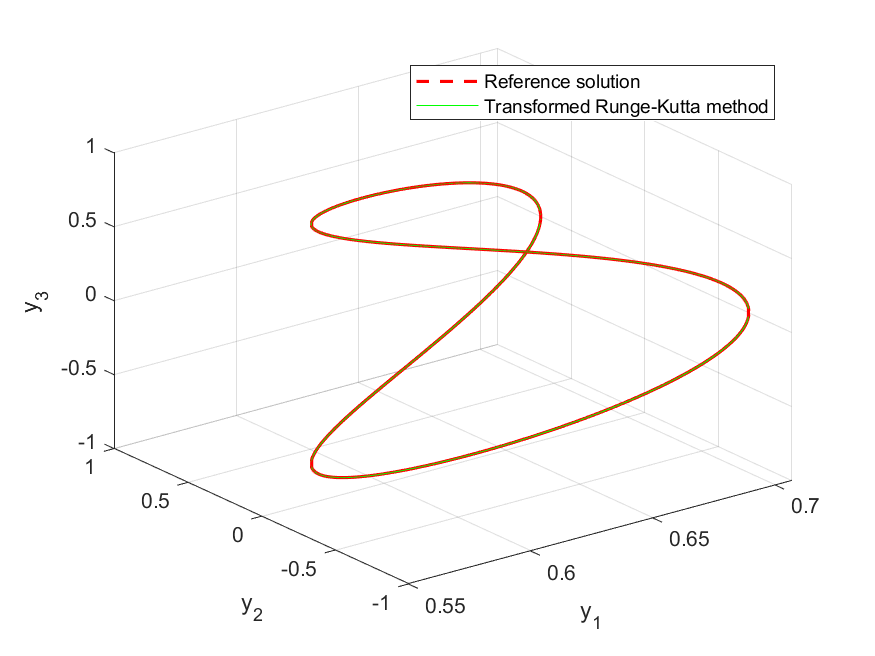}
	\caption{Phase orbits produced by \eqref{5.5} and \eqref{5.6}.}\label{pp2}
\end{figure}

\begin{figure}[H]
	\centering
	\includegraphics[width=4in]{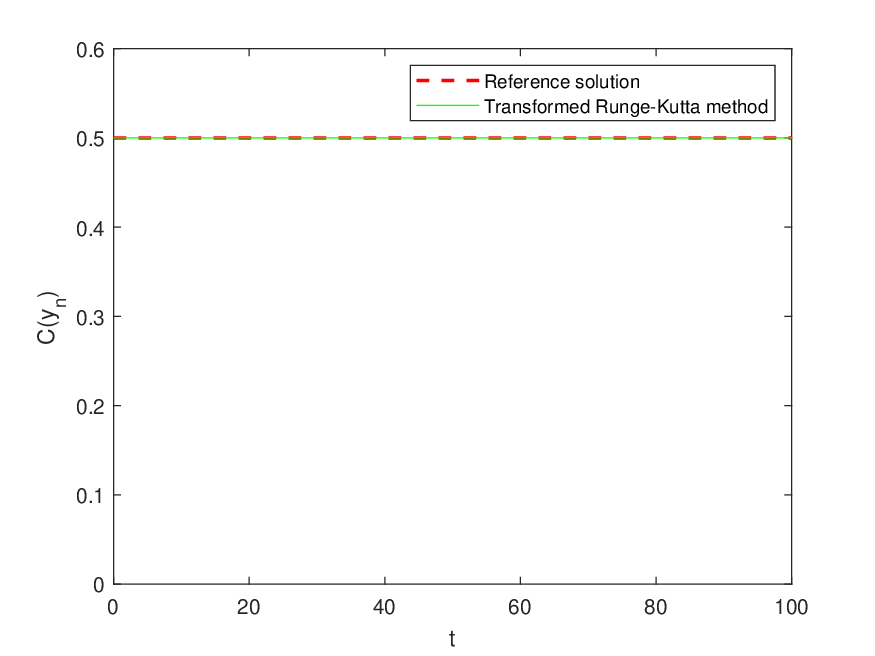}
	\caption{Casimir functions produced by \eqref{5.5} and \eqref{5.6}.}\label{pp3}
\end{figure}

\begin{figure}[H]
	\centering
	\includegraphics[width=4in]{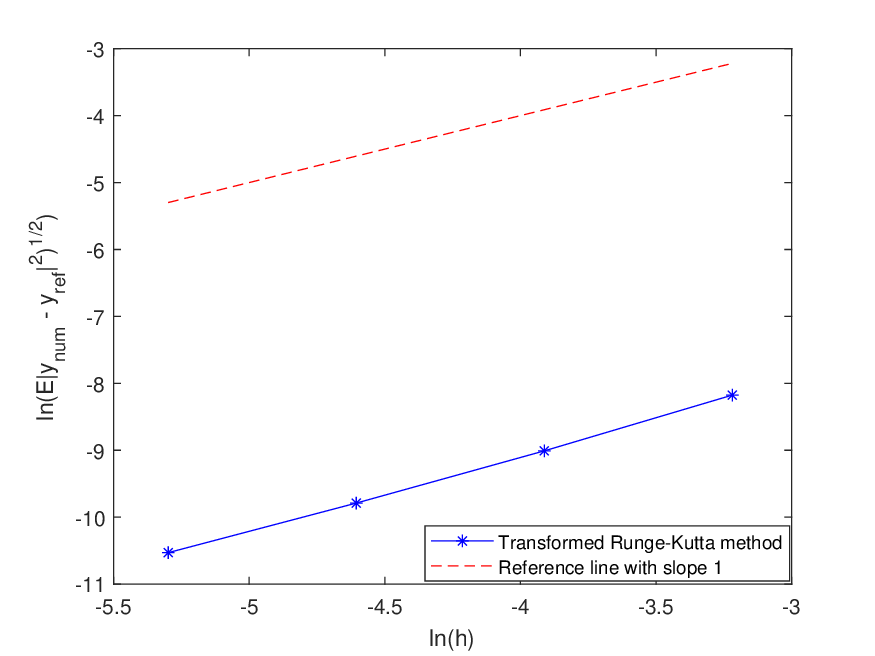}
	\caption{ Mean-square order of the transformed Runge-Kutta methods.}\label{pp4}
\end{figure}

\subsection{Linear stochastic Poisson systems} 
\label{sub5.2}
Now we consider the following linear stochastic Poisson system
\begin{equation}\label{5.7}
	\begin{aligned}
		dy(t) &= \begin{pmatrix}
			0 & 1 & -1\\
			-1 & 0 & 3  \\
			1 & -3 & 0\\
		\end{pmatrix} \left[\begin{pmatrix}
			2 & 1 & 1\\
			1 & 1 & 0  \\
			1 & 0 & 1\\
		\end{pmatrix}ydt + \frac{1}{4}\begin{pmatrix}
			11 & 4 & 4\\
			4 & 2 & 1  \\
			4 & 1 & 2\\
		\end{pmatrix}y\circ dW(t)\right],\\
		y(t_0) &= y_0.
	\end{aligned}
\end{equation}
where $y=(y_1, y_2, y_3)^\top$ and the Hamiltonians
 \begin{equation*}
 	H_1(y)=\frac{1}{2}y^\top\begin{pmatrix}
 		2 & 1 & 1\\
 		1 & 1 & 0  \\
 		1 & 0 & 1\\
 	\end{pmatrix}y, \quad H_2(y)=\frac{1}{8}y^\top\begin{pmatrix}
 		11 & 4 & 4\\
 		4 & 2 & 1  \\
 		4 & 1 & 2\\
 	\end{pmatrix}y.
 \end{equation*}
Obviously, $\left\{H_1, H_2\right\}(y)=\left(\nabla H_1(y) \right)^\top B(y) \nabla H_2(y)=0$, where 
\begin{equation*}
	B(y)\equiv \begin{pmatrix}
		0 & 1 & -1\\
		-1 & 0 & 3  \\
		1 & -3 & 0\\
	\end{pmatrix}.
\end{equation*}
This shows that both $H_1(y)$ and $H_2(y)$ are invariants of the system \eqref{5.7}. Additionally, we can obtain that the Casimir function of the system \eqref{5.7} is $C(y)=3y_{1}+y_{2}+y_{3}$, which is also an invariant of the system \eqref{5.7}.

Let
 \begin{align*}
	A_0&= \begin{pmatrix}
		0 & 1 & -1\\
		-1 & 0 & 3  \\
		1 & -3 & 0\\
	\end{pmatrix} 
   \begin{pmatrix}
		2 & 1 & 1\\
		1 & 1 & 0  \\
		1 & 0 & 1\\
	\end{pmatrix} , &
	A_1&= \frac{1}{4}\begin{pmatrix}
		0 & 1 & -1\\
		-1 & 0 & 3  \\
		1 & -3 & 0\\
	\end{pmatrix}
\begin{pmatrix}
		11 & 4 & 4\\
		4 & 2 & 1  \\
		4 & 1 & 2\\
	\end{pmatrix},
\end{align*}
the equivalent formulation of \eqref{5.7} is given by
\begin{equation}\label{5.8}
	\begin{aligned}
		dy(t) &= A_0 y dt+A_1 y \circ dW(t),\\
		y(t_0) &= y_0,
	\end{aligned}
\end{equation}
and the exact solution of the system \eqref{5.8} from \cite{Arnold1974} is
\begin{equation}\label{5.9}
		y(t) = \exp\left[(t-t_0)A_0+(W(t)-W(t_0))A_1\right]y(t_0).
\end{equation}

Next, by substituting \eqref{5.4} into \eqref{5.8}, we obtain 
\begin{equation}
	\label{5.10}
		y_{n+1}=y_n+\frac{1}{4}hA_0Y_1+\frac{1}{2}A_1Y_1J_k^1+\frac{3}{4}hA_0Y_2+\frac{1}{2}A_1Y_2J_k^1,
\end{equation}
and
\begin{equation*}
	\begin{split}
		Y_1 &= y_n+\frac{1}{8}hA_0Y_1+\frac{1}{2}A_1Y_1J_k^1, \\
		Y_2 &= y_n+\frac{1}{4}hA_0Y_1+\frac{1}{2}A_1Y_1J_k^1+\frac{3}{8}hA_0Y_2+\frac{1}{4}A_1Y_2J_k^1.
	\end{split}
\end{equation*}

In Figure \ref{pp5} and Figure \ref{pp6}, we choose $T = 10$, initial value $y_0 = (1, 0, -1)^\top$ and time step $h = 0.01$. We can see from Figure \ref{pp5} that the single sample paths for the coordinate components obtained from the diagonal implicit Runge-Kutta methods coincide with the exact solutions. Figure \ref{pp6} presents one sample phase orbit generated by the diagonal implicit Runge-Kutta methods and the exact solutions. The graphic shows a high degree of coincidence between the phase orbits. Figure \ref{pp7} illustrates the evolution of the Casimir function $C(y_n)=3y_n^1+y_n^2+y_n^3$ generated by the diagonal implicit Runge-Kutta methods and the exact solutions with $T = 10$, $h = 0.1$ and $y_0 = (1, 1, 2)^\top$. It demonstrates that our numerical methods effectively preserve the Casimir function. Figure \ref{pp8} shows the evolution of the Hamiltonians $H_1(y(t))$ and $H_2(y(t))$ produced by the diagonal implicit Runge-Kutta methods and the exact solutions with initial value $y_0 = (1, 1, 1)^\top$. It can be seen that our methods effectively preserve the quadratic Hamiltonian functions. Finally, Figure \ref{pp9} shows that the mean-square convergence order of the diagonal implicit Runge-Kutta methods is 1. 

\begin{figure}[H]
	\centering
	\includegraphics[width=4in]{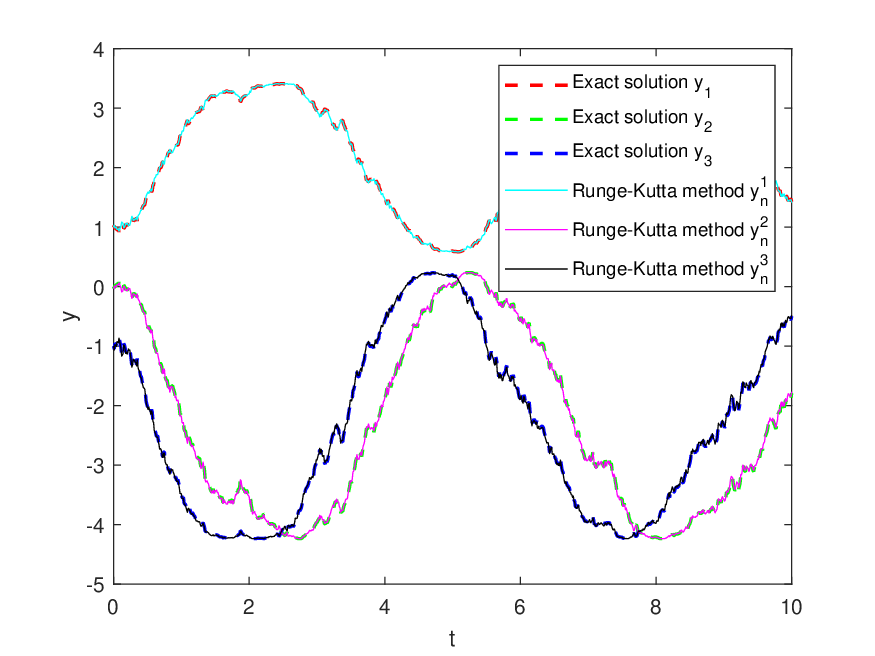}
	\caption{Sample paths produced by \eqref{5.9} and \eqref{5.10}.}\label{pp5}
\end{figure}

\begin{figure}[H]
	\centering
	\includegraphics[width=4in]{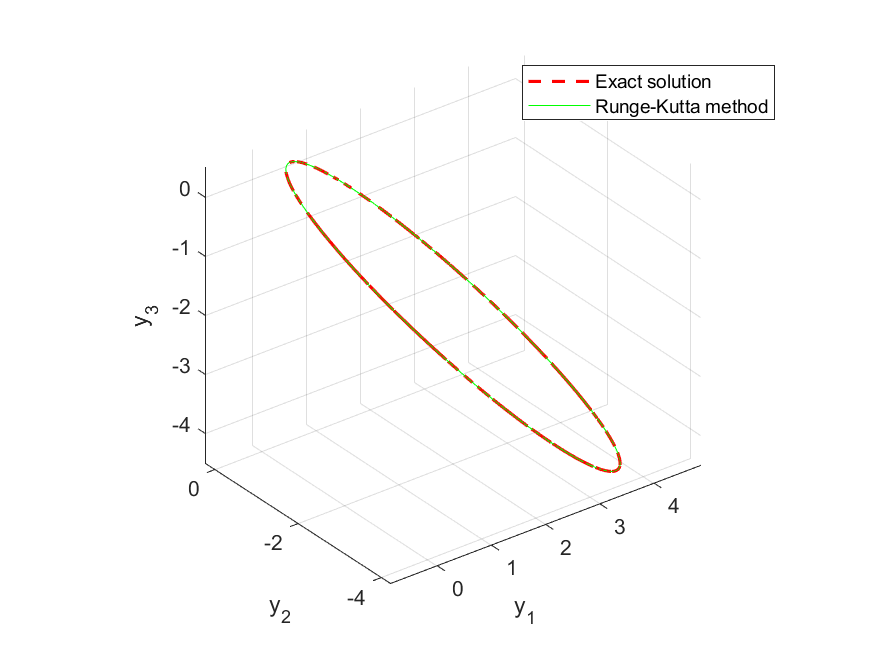}
	\caption{Phase orbits produced by \eqref{5.9} and \eqref{5.10}.}\label{pp6}
\end{figure}

\begin{figure}[H]
	\centering
	\includegraphics[width=4in]{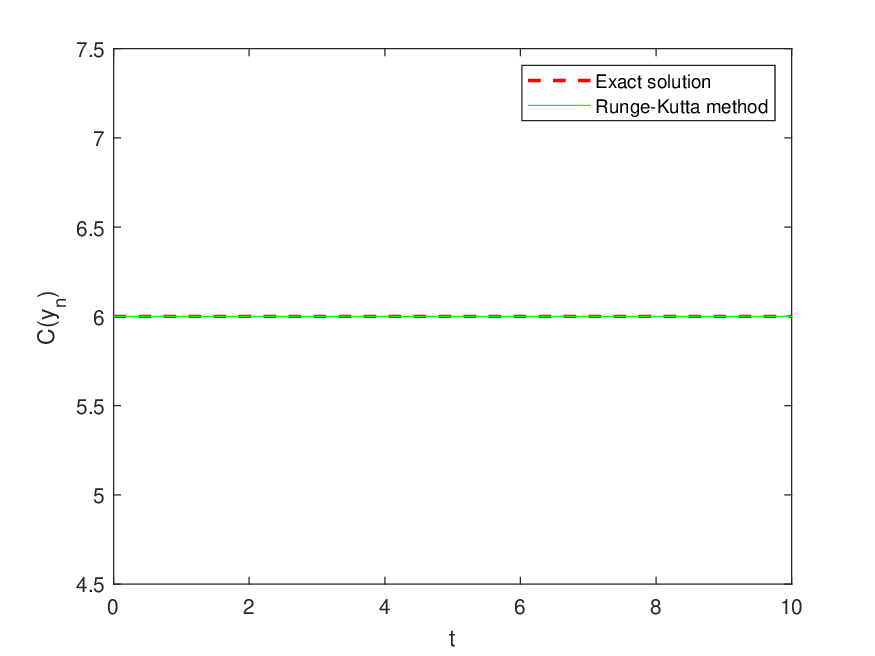}
	\caption{Casimir functions produced by \eqref{5.9} and \eqref{5.10}. }\label{pp7}
\end{figure}

\begin{figure}[H]
	\centering
	\includegraphics[width=4in]{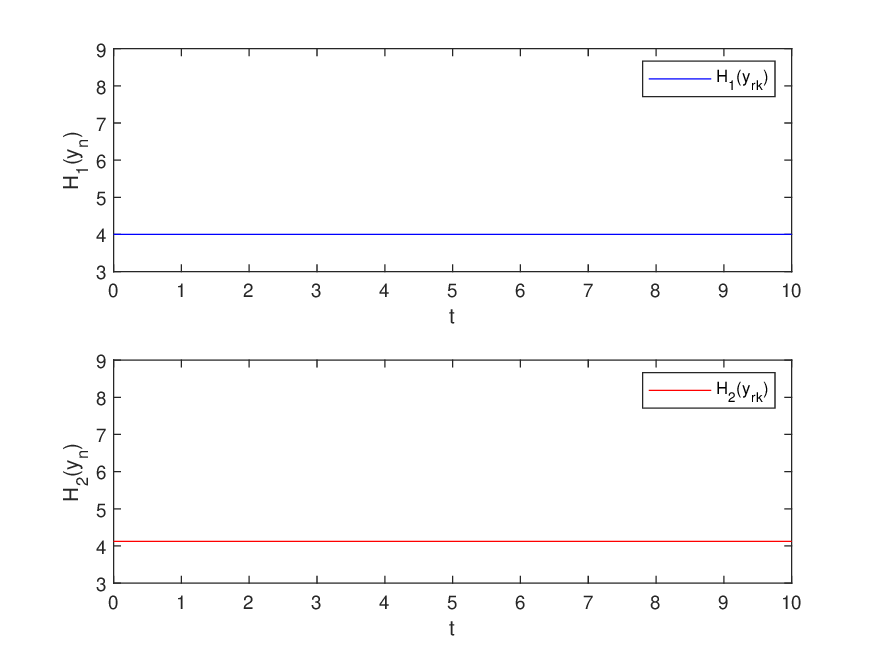}
	\caption{Evolution of $H_1(y(t)),$ $H_2(y(t))$ by \eqref{5.10}. }\label{pp8}
\end{figure}

\begin{figure}[H]
	\centering
	\includegraphics[width=4in]{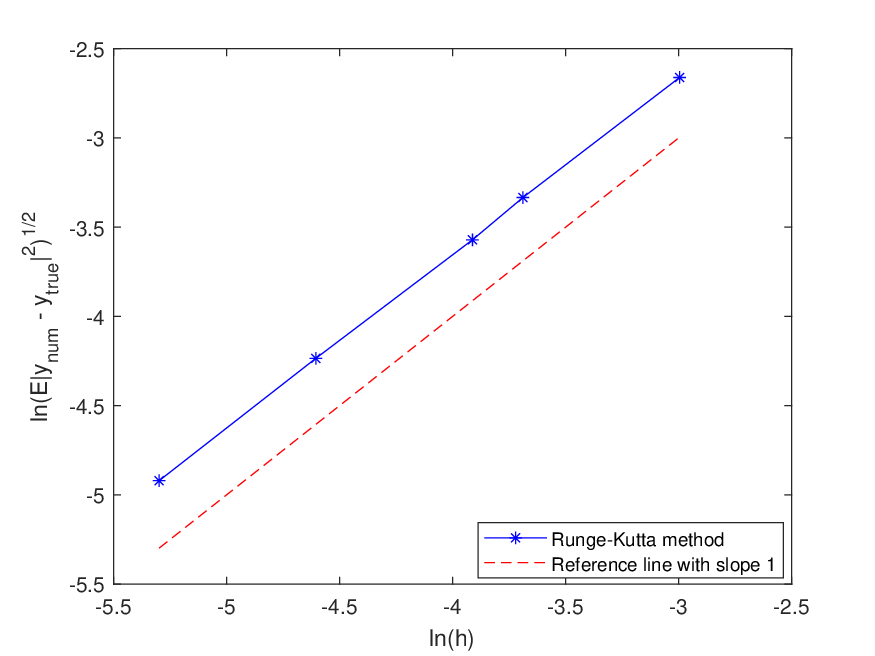}
	\caption{Mean-square order of the diagonal implicit Runge-Kutta methods with $h=[0.005, 0.01, 0.02, 0.025, 0.05]$ and sample paths 1000. }\label{pp9}
\end{figure}


\section{Conclusion}
This paper investigates the structure-preserving properties of implicit Runge-Kutta methods for stochastic Poisson systems driven by multiple noises. For different forms of the structure matrix $B(y)$, we propose the transformed Runge-Kutta methods and the diagonal implicit Runge-Kutta methods, and systematically analyse the structure-preserving properties of these numerical methods. Numerical experiments demonstrate the effectiveness and reliability of the proposed methods. 
Future research will focus on optimizing the coefficient conditions for more complex classes of Runge-Kutta methods, with the aim of exploring their structure-preserving properties in high-dimensional stochastic Poisson systems. Furthermore, the methods proposed in this paper can be extended to stochastic partial differential equations to broaden their applicability in more scientific fields of stochastic computing.

\section*{Acknowledgments} 
The authors would like to express their appreciation to the referees for their useful comments and the editors. Liying Zhang is supported by the National Natural Science Foundation of China (No.\,11601514 and No.\,11971458), the Fundamental Research Funds for the Central Universities (No.\,2023ZKPYL02 and No.\,2023JCCXLX01) and the Yueqi Youth Scholar Research Funds for the China University of Mining and Technology-Beiiing (No.\,2020YQLX03). Lijin Wang is supported by the National Natural Science Foundation of China (No.\,11971458).

\bibliography{references}
\bibliographystyle{plain}


\end{document}